\documentclass[11pt,a4paper]{article}

\usepackage[utf8]{inputenc}
\usepackage[T1]{fontenc}
\usepackage[english]{babel}
\usepackage{amsmath, amssymb, amsthm}
\usepackage{hyperref}
\usepackage{geometry}
\geometry{margin=3cm}
\usepackage{xcolor}
\usepackage{enumitem}
\newtheorem{theorem}{Theorem}[section]

\newtheorem{proposition}[theorem]{Proposition}

\theoremstyle{definition}
\newtheorem{definition}[theorem]{Definition}
\newtheorem{example}[theorem]{Example}

\theoremstyle{remark}
\newtheorem{remark}[theorem]{Remark}

\newtheorem{theoremx}{\textbf{Theorem}}

\title{Classification of normal toric surfaces resolved by a single Nash blowup}
\author{Amador Cruz-Fuentes\thanks{Research supported by SECIHTI project CF-2023-G-33.}}
\date{\today}

\begin{document}

\maketitle

\begin{abstract}
We present a complete classification of normal toric surfaces that are resolved by a single normalized Nash blowup. Likewise, we obtain a complete classification of those resolved by a single Nash blowup. In both cases, the classification is expressed in terms of the continued fraction associated with the normal toric surface.
\end{abstract}

\section*{Introduction}

The Nash blowup of an algebraic variety is a modification that replaces its singular points by limits of tangent spaces of nearby smooth points.
It was proposed that singularities could be resolved by iterating this process or by composing it with the normalization \cite{Semple, Nobile, mark, GonzalezSprinberg1977ventailsED}.

The starting point for the study of the resolution problem via Nash blowups was the work of A. Nobile \cite{Nobile}, who proved that the Nash blowup coincides with the blowup of the variety along an explicitly described ideal. In the same work, Nobile also showed that, over fields of characteristic zero, the Nash blowup is an isomorphism if and only if the variety is smooth. Later on, an analogous result was proved over postiive characteristic fields and normal varieties by D. Duarte and L. Núñez-Betancourt \cite{DN}. These results confirmed the relevance of studying Nash blowups as a tool for resolution.

In both the Nash blowup and its normalized version, several authors have provided evidence suggesting that singularities might indeed be resolved by iterating these processes \cite{Nobile, rebassoo, GonzalezSprinberg1977ventailsED, Gonzaleznashnorma, hironakanash, mark, sprinfrac, Gize, grig, duarte1, ata, Duante-Green, DJN, thais, DuarteSnoussi2025, CDLLfree}.
Nevertheless, F. Castillo, D. Duarte, M. Leyton, and A. Liendo recently presented explicit examples in dimension four and higher of normal affine varieties whose Nash blowup and normalized Nash blowup contain an affine chart isomorphic to the original variety \cite{CDLAL, CDLLcomp}.
This shows that, in general, neither of the two procedures resolves singularities. Moreover, a counterexample to the question on non-normalized Nash blowups in dimension three was found recently \cite{castillo2025nonnormalizednashblowupfails, CDLLcomp}. Considering the known counterexamples, to fully answer Nash’s questions, attention is currently focused on non-normalized Nash blowups in dimension two and on normalized Nash blowups in dimension three. Since these counterexamples are toric, it is natural to begin by analyzing the toric case.

In particular, it remains unknown whether iterating the Nash blowup resolves singularities in dimension two.
In this work, we focus on characterizing the hypothetical final step of the process in the toric case, that is, describing conditions on a toric surface under which a single Nash blowup suffices to obtain a resolution. We give an answer to this question in the case of normal toric surfaces.\looseness=-1 

Our first main theorem concerns the normalized Nash blowup.

\begin{theoremx}
    Let $\operatorname{char}(\mathbb{K}) = p \ge 0$.  
    The normalized Nash blowup of a normal toric surface is smooth if and only if its associated continued fraction
    $
        [1, a_2, \dots, a_r]
    $
    is one of the following:

    \begin{enumerate}[label=(\arabic*)]
        \item $[1,2]$ if $r = 2$.
        \item $[1,2,2]$ if $r = 3$.
        \item $[1,2,2,2]$, $[1,2,3,2]$ or $[1,2,4,2]$ if $r = 4$.
        \item $[1,2,a_3,\dots,a_{r-1},2]$ for $r \ge 5$, where $(a_i, a_{i+1}) \in \{(2,2),(2,3),(2,4),(3,2),(4,2)\}$ for all $3 \le i \le r-2$.
    \end{enumerate}
\end{theoremx}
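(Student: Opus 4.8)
The plan is to reduce the theorem to a combinatorial statement about the continued fraction and then to settle that statement by an explicit, essentially finite, analysis. The first ingredient is the combinatorial description of the normalized Nash blowup of a normal toric surface, going back to González-Sprinberg. Write $X_\sigma$ for the surface, with $\sigma\subset N_{\mathbb{R}}\cong\mathbb{R}^2$ and dual cone $\sigma^\vee\subset M_{\mathbb{R}}$, and let $\mathbf{m}_0,\mathbf{m}_1,\dots,\mathbf{m}_k$ be the Hilbert basis of the semigroup $\sigma^\vee\cap M$ in cyclic order, so that consecutive pairs are $\mathbb{Z}$-bases of $M$ and $\mathbf{m}_{j-1}+\mathbf{m}_{j+1}=b_j\mathbf{m}_j$ for the Hirzebruch--Jung continued fraction $[b_1,\dots,b_{k-1}]$ of the cone $\sigma^\vee$. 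Then the normalized Nash blowup of $X_\sigma$ is the toric surface attached to the normal fan of the Nash polyhedron $\mathcal{N}_\sigma:=\operatorname{conv}\{\mathbf{m}_j+\mathbf{m}_{j+1}:0\le j\le k-1\}+\sigma^\vee$ (equivalently, the normalized blowup of $X_\sigma$ along its logarithmic Jacobian ideal). Consequently the normalized Nash blowup is smooth if and only if every maximal cone of this normal fan is unimodular; since the maximal cones correspond to the vertices of $\mathcal{N}_\sigma$ and the interior rays to its bounded edges, one must control which of the points $P_j:=\mathbf{m}_j+\mathbf{m}_{j+1}$ are vertices of $\mathcal{N}_\sigma$ and, at each vertex, whether the cone spanned by the inner normals of the two incident bounded edges (or, at $P_0$ and $P_{k-1}$, by one such normal and a boundary ray of $\sigma$) is unimodular.

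Next I would pass from the polyhedron to the continued fraction. The relations $\mathbf{m}_{j-1}+\mathbf{m}_{j+1}=b_j\mathbf{m}_j$ give $(P_{j-1}+P_{j+1})-2P_j=(b_j-2)\mathbf{m}_j+(b_{j+1}-2)\mathbf{m}_{j+1}\in\sigma^\vee$, so $P_j$ is a vertex of $\mathcal{N}_\sigma$ exactly when $(b_j,b_{j+1})\ne(2,2)$ (for $1\le j\le k-2$), while $P_0$ and $P_{k-1}$ are always vertices; thus a maximal run of $2$'s in $[b_1,\dots,b_{k-1}]$ collapses to a single bounded edge. Using the standard dictionary relating the continued fraction $[1,a_2,\dots,a_r]$ of $X_\sigma$ to the Hilbert basis $\{\mathbf{m}_j\}$ and the numbers $b_j$ (so that $k=r$ and $b_j=a_{j+1}$, the leading $1$ being a normalization at the edge $\mathbf{m}_0$), the inner normals of the bounded edges, and hence the determinants that measure unimodularity of the normal cones, are computed by a recursion along the $a_i$. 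Away from the two ends of the chain this recursion involves only three consecutive entries, and each unimodularity requirement becomes a bilinear or quadratic equation in $a_{i-1},a_i,a_{i+1}$; solving these over the integers $\ge 2$ leaves exactly the admissible transitions $(a_i,a_{i+1})\in\{(2,2),(2,3),(2,4),(3,2),(4,2)\}$, and the boundary instances of the same equations force $a_1=1$ and $a_r=2$.

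It then remains to handle the short chains, the converse, and the characteristic. For $r=2,3,4$ the chain is too short for the generic recursion and the two end conditions overlap; these I would settle directly, enumerating the finitely many continued fractions $[1,a_2,\dots,a_r]$ compatible with the coupled end conditions and checking smoothness of the normalized Nash blowup from $\mathcal{N}_\sigma$ case by case, which yields items (1)--(3). For $r\ge 5$ the ``if'' direction is the same computation run backwards: a continued fraction of the stated form produces only unimodular normal cones. Finally, since the argument takes place entirely on $\sigma$ and $\mathcal{N}_\sigma$, and since for the continued fractions in the list the generators of the logarithmic Jacobian ideal that enter as vertices of $\mathcal{N}_\sigma$ are the sums $P_j$ of consecutive Hilbert basis elements in every characteristic---the remaining generators, the sums over non-adjacent Hilbert basis elements present in characteristic zero, lie on bounded edges of $\mathcal{N}_\sigma$ or in its interior, so that passing to characteristic $p$ only removes some of these and leaves the normal fan unchanged---the classification holds verbatim for all $\operatorname{char}(\mathbb{K})=p\ge 0$; I would isolate this as a short lemma.

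The step I expect to be the main obstacle is the passage to the continued fraction: identifying the correct recursion for the edge normals of $\mathcal{N}_\sigma$ and proving that imposing unimodularity at every step, with the right behaviour at both ends, is equivalent to the stated pattern. The delicate point is the exact threshold---why $(2,4)$ and $(4,2)$ are allowed but $(2,5)$, $(3,3)$, $(3,4)$ and larger are not---which is precisely the value at which a bounded-edge lattice point of $\mathcal{N}_\sigma$ stops being a vertex, or a normal cone stops being unimodular; pinning this down cleanly, and tracking how runs of $2$'s interact with the isolated larger entries, is where essentially all the work lies. By comparison the short continued fractions are a routine if slightly tedious verification.
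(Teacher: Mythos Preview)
Your proposal is correct and follows essentially the same route as the paper: identify the vertices of the Nash polyhedron via the condition $(a_i,a_{i+1})\ne(2,2)$ (the paper's Proposition~\ref{verti}), analyze the endpoint cones to obtain the conditions on the first and last nontrivial entries (Propositions~\ref{res1} and~\ref{res2}), and solve the unimodularity equation at each interior vertex to obtain the admissible pairs (Proposition~\ref{res3}), with the characteristic-independence handled by the observation that adjacent sums $v_{i-1}+v_i$ have determinant~$1$ and hence survive modulo every $p$ (the paper cites \cite[Proposition~2.4]{DJN}). Two small slips to correct when you write it up: the boundary condition you want is $a_2=2$, not $a_1=1$ (the latter is automatic from the normal form), and the unimodularity equation at an interior vertex depends only on the pair $(a_i,a_{i+1})$, not on $a_{i-1}$---indeed, the paper shows the determinant of the non-primitive edge vectors is exactly $a_ia_{i+1}-4$, and the parity case-split on $(a_i,a_{i+1})$ then yields precisely the list $\{(2,3),(2,4),(3,2),(4,2)\}$.
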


The second main theorem deals with the non-normalized Nash blowup.

\begin{theoremx}
    Assume $\operatorname{char}(\mathbb{K})=0$.  
The Nash blowup of a normal toric surface is smooth if and only if its associated continued fraction 
\(
[1,a_2,\dots,a_r]
\) 
is one of the following:

\begin{enumerate}[label=(\arabic*)]
    \item $[1,2]$ if $r=2$.
    \item $[1,2,2]$ if $r=3$.
    \item $[1,2,2,2]$ or  $[1,2,3,2]$, if $r=4$.
    \item $[1,2,a_3,\dots, a_{r-1},2]$ for $r \geq 5$, where $(a_i,a_{i+1}) \in \{(2,2),(2,3),(3,2)\}$ for all $3 \leq i \leq r-2$.
\end{enumerate}
\end{theoremx}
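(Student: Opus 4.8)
The plan is to derive Theorem B from Theorem A together with the relation between the Nash blowup and its normalization. Write $\mathrm{Nash}(X)$ for the Nash blowup of a normal toric surface $X$, and recall that its normalized Nash blowup is, by definition, the normalization of $\mathrm{Nash}(X)$. If $\mathrm{Nash}(X)$ is smooth it is in particular normal, so it coincides with its own normalization; hence the normalized Nash blowup is smooth and, by Theorem A, the continued fraction of $X$ lies in the list (1)--(4) there. Comparing the two lists, the only entries of Theorem A that do not occur in Theorem B are exactly those containing a ``$4$'': the fraction $[1,2,4,2]$ when $r=4$, and, for $r\ge 5$, those having some adjacent pair $(a_i,a_{i+1})\in\{(2,4),(4,2)\}$; after the constraints of Theorem A this is the same as containing the local pattern $\dots,2,4,2,\dots$. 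So it remains to prove that, for a continued fraction already in Theorem A's list, $\mathrm{Nash}(X)$ is smooth if and only if it contains no $4$.

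For this I would use the combinatorial description of the Nash blowup of a normal toric surface (González--Sprinberg): with $\sigma$ the cone of $X$ and $m_0,\dots,m_\ell$ the Hilbert basis of $\sigma^{\vee}\cap M$ (a convex chain, $m_{i-1}+m_{i+1}=c_im_i$), the variety $\mathrm{Nash}(X)$ is the blowup of $X$ along the logarithmic Jacobian ideal, and is covered by affine charts, each of the form $\operatorname{Spec}\mathbb{K}[S_j]$, where $S_j\subseteq M$ is the subsemigroup generated by $m_0,\dots,m_\ell$ together with the differences of the generators of that ideal. Such a chart is smooth precisely when $S_j$ is free of rank two, i.e.\ when its cone $\tau_j$ is unimodular \emph{and} $S_j$ is saturated ($S_j=\tau_j^{\vee}\cap M$). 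For a continued fraction in Theorem A's list every $\tau_j$ is unimodular --- that is exactly what smoothness of the normalized Nash blowup means --- so, within that list, $\mathrm{Nash}(X)$ is smooth if and only if every $S_j$ is saturated.

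The core of the argument is then to decide when the $S_j$ are saturated. First I would make the generating set of each $S_j$ explicit, reducing it, via the relations $m_{i-1}+m_{i+1}=c_im_i$, to a bounded list of lattice vectors read off from the nearby continued-fraction entries $a_{j-1},a_j,a_{j+1}$ (and, for the two extremal charts, from the rays of $\sigma^{\vee}$ as well); this also shows that saturation of the $j$-th chart is a local condition on a window of consecutive entries. On a fraction from Theorem A's list that contains a $4$, I would exhibit, in the chart attached to the corresponding position, a lattice point of $\tau_j^{\vee}\cap M$ that is not in $S_j$ --- the associated monomial then lies in the normalization but not in the chart, so $\mathrm{Nash}(X)$ is not smooth. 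Conversely, when every entry lies in $\{2,3\}$ (and the pairs obey Theorem A's restrictions, so in particular no $(3,3)$ and no $4$), the finitely many admissible local windows can be checked one by one to confirm that the explicit generators of $S_j$ already generate $\tau_j^{\vee}\cap M$; together with the short cases $r\le 4$, treated directly, this gives that all charts are saturated, hence $\mathrm{Nash}(X)$ is smooth. Combined with the first paragraph, this proves Theorem B.

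The step I expect to be the main obstacle is this explicit local analysis of the semigroups $S_j$: one must control the Hilbert basis of $\sigma^{\vee}\cap M$ --- and hence each chart --- precisely enough, as a function of the continued fraction, both to pinpoint the missing lattice point in the ``$\dots,2,4,2,\dots$'' case and to make the saturation check in the remaining cases genuinely finite, i.e.\ to establish that it depends only on a bounded window of consecutive entries. The behaviour at the two ends of the fraction, where $a_2=a_r=2$ are forced and where the extremal charts involve rays of $\sigma^{\vee}$ itself, will require separate but parallel bookkeeping. It is also worth pointing out where the hypothesis $\operatorname{char}(\mathbb{K})=0$ is used, in contrast with Theorem A: over a field of positive characteristic some of the Jacobian minors defining the Nash blowup of a toric variety may vanish, so $\mathrm{Nash}(X)$ need no longer be the blowup along the logarithmic Jacobian ideal and the chart description above can fail.
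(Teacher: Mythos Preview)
Your proposal is correct and follows essentially the same route as the paper: reduce to Theorem~A via normality, then decide smoothness of each non-normalized chart by testing whether the semigroup $\Gamma(v_{i-1},v_i)$ is saturated, which the paper does exactly as you outline---for $(a_i,a_{i+1})\in\{(2,3),(3,2)\}$ and for the extremal charts it verifies that the primitive generators of the (unimodular) localization cone already lie in $\Gamma(v_{i-1},v_i)$, while for $(a_i,a_{i+1})\in\{(2,4),(4,2)\}$ it exhibits the missing lattice point $u_2=2v_i-v_{i-1}$, whose double $2u_2=v_{i+1}-v_{i-1}$ lies in the semigroup. The only small inaccuracy is your remark on characteristic: in positive characteristic the Nash blowup \emph{is} still the blowup along a monomial ideal (the logarithmic Jacobian ideal modulo $p$), and even the Newton polyhedron is unchanged; what can change are the generating sets $A(v_{i-1})$, $A(v_i)$ of the non-normalized charts, since the condition $\det(v_j\,v_k)\neq 0$ must be read modulo $p$.
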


As these results illustrate, we follow the classical approach based on continued fractions, which has played a central role in the study of singularities of normal toric surfaces \cite{Oda1988, Fulton, CLS, ata}. The approach adopted in this article follows the treatment given in \cite{ata}.\\

\medskip
\noindent\textbf{Acknowledgments.} The author would like to express his gratitude to Daniel Duarte for his careful reading of the manuscript and for his valuable comments, suggestions, and advice. He is also sincerely thankful for the time and guidance provided throughout the preparation of this work. 

\section{Nash blowups and toric varieties}
We begin by recalling the definition of the Nash blowup of an algebraic variety, followed by the definition of an affine toric variety.

\begin{definition}
Let $\mathbb{K}$ be an algebraically closed field of arbitrary characteristic. Let $X \subset \mathbb{K}^n$ be an irreducible algebraic variety of dimension $d$. Consider the map
    $$
    G : X \setminus \operatorname{Sing}(X) \longrightarrow X \times Gr(d,n),\quad
    G(p) = (p, T_p(X)),
    $$
    where $T_p(X)$ is the tangent space of $X$ at $p$. Denote by $X^{\ast}$ the Zariski closure of the image of $G$ in $X \times Gr(d,n)$, and let $\nu$ be the restriction to $X^{\ast}$ of the projection $X \times Gr(d,n) \to X$. The Nash blowup of $X$ is the pair $(X^{\ast}, \nu)$. 
\end{definition}

In this paper, we are interested in the Nash blowup of toric varieties.

\begin{definition}
Let $\Gamma \subset \mathbb{Z}^d$ be the semigroup generated by $\{\gamma_1,\dots , \gamma_n\}$. Consider the $\mathbb{K}$-algebra homomorphism $\pi _{\Gamma}:\mathbb{K}[x_1,\dots, x_n]\to \mathbb{K}[t_1^{\pm 1},\dots, t_d^{\pm 1}]$, defined by $x_i \mapsto t^{\gamma_i}$. The variety $X_{\Gamma}=V(\operatorname{Ker}(\pi_{\Gamma}))$ is called the toric variety defined by $\Gamma$, and its coordinate ring is denoted $\mathbb{K}[x^{\Gamma}]$.
\end{definition}

Given a finitely generated semigroup $\Gamma \subset \mathbb{Z}^d$, its saturation is given by $\check{\sigma}\cap \mathbb{Z}^d$, where $\check{\sigma}$ is the cone generated by $\Gamma$. The corresponding toric variety is denoted $X_{\sigma}$, which is the normalization of $X_{\Gamma}$.

In the context of toric varieties, the Nash blowup can be described combinatorially. In characteristic zero, this description relies on the logarithmic Jacobian ideal introduced by G. González-Sprinberg in \cite{GonzalezSprinberg1977ventailsED}. Later, in \cite{DJN}, this concept was generalized to prime characteristic by D. Duarte, J. Jeffries, and L. Núñez-Betancourt, allowing an extension of the description. 

\begin{definition}
    [\cite{GonzalezSprinberg1977ventailsED,GT,DJN}]
    Suppose $\operatorname{char}(\mathbb{K})=p\geq0$. Let $\Gamma \subset \mathbb{Z}^d$ be a semigroup generated by $ \{  \gamma_1,\dots, \gamma_n \}$ such that $\mathbb{Z}\Gamma=\mathbb{Z}^d$. The logarithmic Jacobian ideal modulo $p$ of $X_{\Gamma}$ is defined by:
   $$\mathcal{J}_p= \langle t^{\gamma_{i_1}+\cdots +\gamma_{i_d}}\mid \det(\gamma_{i_1}\cdots \gamma_{i_d})\not \equiv 0 \pmod{p}, \; 1\leq i_1<\cdots <i_d\leq n \rangle \subseteq \mathbb{K}[x^{\Gamma}].$$
\end{definition}

\begin{theorem}[\cite{GonzalezSprinberg1977ventailsED,GT,DJN}]\label{Nashtoric}
    Suppose $\operatorname{char}(\mathbb{K})=p\geq 0$. The Nash blowup of $X_{\Gamma}$ is isomorphic to the blowup of the variety along its logarithmic Jacobian ideal modulo $p$.
\end{theorem}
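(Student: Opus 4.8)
The plan is to compute the Gauss map of $X_\Gamma$ explicitly on the dense torus orbit, push it forward through the Plücker embedding, recognize the resulting rational map as the one whose components are the generators of $\mathcal{J}_p$, and then invoke the standard description of the blowup of an affine variety along an ideal as the closure of the graph of the associated rational map to projective space. So this is essentially a direct logarithmic-Jacobian computation glued to a formal fact about blowups.

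First I would set up the torus. Since $\mathbb{Z}\Gamma=\mathbb{Z}^d$, the parametrization $\varphi\colon T:=(\mathbb{K}^\ast)^d\to X_\Gamma$, $t\mapsto(t^{\gamma_1},\dots,t^{\gamma_n})$, is a locally closed immersion with dense image, and the principal orbit $\varphi(T)$ is contained in $X_\Gamma\setminus\operatorname{Sing}(X_\Gamma)$ because the latter is a nonempty torus-invariant open subset and $\varphi(T)$ is a single orbit. For $t\in T$ the tangent space $T_{\varphi(t)}(X_\Gamma)\subseteq\mathbb{K}^n$ is the image of $d\varphi_t$, which (multiplying the partials $\partial\varphi/\partial t_j$ by the units $t_j$) is the row span of the $d\times n$ matrix $M(t)=\big(\gamma_{i,j}\,t^{\gamma_i}\big)_{1\le j\le d,\;1\le i\le n}$; this span is $d$-dimensional because the $\gamma_i$ span $\mathbb{R}^d$. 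Hence, as a point of $Gr(d,n)$, $T_{\varphi(t)}(X_\Gamma)$ has Plücker coordinates the maximal minors of $M(t)$, and for $1\le i_1<\cdots<i_d\le n$ one computes
$$
\det\!\big(\gamma_{i_1,j}\,t^{\gamma_{i_1}},\dots,\gamma_{i_d,j}\,t^{\gamma_{i_d}}\big)_{1\le j\le d}
= t^{\gamma_{i_1}+\cdots+\gamma_{i_d}}\,\det(\gamma_{i_1}\cdots\gamma_{i_d}).
$$
Over a field of characteristic $p\ge 0$ the integer scalar $\det(\gamma_{i_1}\cdots\gamma_{i_d})$ is nonzero in $\mathbb{K}$ precisely when it is nonzero modulo $p$, so, writing $c_I:=\det(\gamma_{i_1}\cdots\gamma_{i_d})$, the composition of the Gauss map with the Plücker embedding $Gr(d,n)\hookrightarrow\mathbb{P}^{\binom{n}{d}-1}$ is on $T$ the map $t\mapsto\big[\,c_I\,t^{\gamma_{i_1}+\cdots+\gamma_{i_d}}\,\big]_{I=\{i_1<\cdots<i_d\}}$, whose nonzero coordinates are exactly those indexed by the $I$ with $c_I\not\equiv 0\pmod p$. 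Discarding the identically-zero coordinates and rescaling the rest by the nonzero constants $c_I$, this is precisely the rational map $X_\Gamma\dashrightarrow\mathbb{P}^{m}$ (with $m+1$ the number of listed generators) whose components are the monomials $t^{\gamma_{i_1}+\cdots+\gamma_{i_d}}$ generating $\mathcal{J}_p$.

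To conclude, recall that $X_\Gamma^{\ast}$ is by definition the closure in $X_\Gamma\times Gr(d,n)$ of the graph of the Gauss map over $X_\Gamma\setminus\operatorname{Sing}(X_\Gamma)$; since $\varphi(T)$ is open and dense in the (irreducible) smooth locus, this equals the closure of the graph over $\varphi(T)$. Because $X_\Gamma\times Gr(d,n)$ is closed in $X_\Gamma\times\mathbb{P}^{\binom{n}{d}-1}$ and already contains this graph, the same set is the closure, inside $X_\Gamma\times\mathbb{P}^{\binom{n}{d}-1}$, of the graph of the composite map from the previous step, hence — after deleting the unused coordinates — the closure in $X_\Gamma\times\mathbb{P}^{m}$ of the graph of the rational map given by the generators of $\mathcal{J}_p$. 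Finally, for $X=\operatorname{Spec}A$ affine and $\mathcal{J}=(f_0,\dots,f_m)\subseteq A$, the surjection $A[y_0,\dots,y_m]\twoheadrightarrow\bigoplus_{k\ge 0}\mathcal{J}^k$, $y_i\mapsto f_i t$, realizes $\operatorname{Bl}_{\mathcal{J}}X=\operatorname{Proj}\big(\bigoplus_{k}\mathcal{J}^k\big)$ as the closure in $X\times\mathbb{P}^m$ of $\{(x,[f_0(x):\cdots:f_m(x)])\mid x\notin V(\mathcal{J})\}$; applying this with $\mathcal{J}=\mathcal{J}_p$ identifies $X_\Gamma^{\ast}$ with $\operatorname{Bl}_{\mathcal{J}_p}X_\Gamma$.

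There is no deep obstacle here; the points needing genuine care are the two "matching" steps rather than the algebra. One is the verification that the dense torus orbit lies in the smooth locus, so that the Gauss map is actually defined along $\varphi(T)$ and the closure over $\varphi(T)$ agrees with the closure over all of $X_\Gamma\setminus\operatorname{Sing}(X_\Gamma)$. The other is the interchange of "closure inside the Grassmannian bundle" with "closure inside the Plücker-embedded projective bundle", which works only because the graph already sits in the closed subscheme $X_\Gamma\times Gr(d,n)$. The role of the characteristic is confined to recording which minors $\det(\gamma_{i_1}\cdots\gamma_{i_d})$ survive as nonzero scalars in $\mathbb{K}$, i.e.\ exactly which monomials appear among the generators of $\mathcal{J}_p$.
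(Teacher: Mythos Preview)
The paper does not prove this theorem; it is quoted from the references \cite{GonzalezSprinberg1977ventailsED,GT,DJN} and used as a black box. Your argument is correct and is essentially the standard proof given in those sources: compute the Gauss map on the dense torus via the Jacobian of the monomial parametrization, read off the Pl\"ucker coordinates as $\det(\gamma_{i_1}\cdots\gamma_{i_d})\,t^{\gamma_{i_1}+\cdots+\gamma_{i_d}}$, observe that the integer scalar survives in $\mathbb{K}$ precisely when it is nonzero modulo $p$, and identify the closure of the graph with the blowup along the ideal generated by the surviving monomials. The two points you flag as needing care (that the open orbit lies in the smooth locus, and the passage between closures in $X_\Gamma\times Gr(d,n)$ and in the ambient $X_\Gamma\times\mathbb{P}^{\binom{n}{d}-1}$) are handled correctly; the only small addendum worth making explicit is that the hypothesis $\mathbb{Z}\Gamma=\mathbb{Z}^d$ guarantees, via the gcd of maximal minors being $1$, that at least one determinant is nonzero modulo $p$, so $\mathcal{J}_p\neq 0$ and the rational map to $\mathbb{P}^m$ is genuinely defined on the torus.
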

This theorem, together with results presented by P. González and B. Teissier in \cite{GT}, provides a combinatorial description of the Nash blowup of a toric variety. The description is presented below.

\section{Continued fractions in the study of toric surfaces}\label{descri}

It is well known that the minimal generating set of the semigroup associated with a normal toric surface can be described in terms of a continued fraction. This relation has been studied by several authors \cite{Oda1988, Fulton, CLS,ata}. In this section, we present this description and state some basic properties.

Given a strongly convex rational polyhedral cone  
\(\check{\sigma} \subset \mathbb{R}^2\), there exists an element of \(SL(2,\mathbb{Z})\) that transforms it into a cone of the form  
$$
 \mathbb{R}_{\geq 0}\{ (1,0), (P,Q)\},
$$  
where $0 \leq P < Q$ and $P, Q$ are coprime \cite[Proposition 4.9]{ata}. We call it the normal form of the cone.  
Without loss of generality, we assume that our initial cone is already in this form.  

Consider the rational number $P/Q$ and its continued fraction expansion:  
$$
\dfrac{P}{Q}=a_1-\dfrac{1}{a_2-\dfrac{1}{a_3-\dfrac{1}{\ddots -\dfrac{1}{a_r}}}}.
$$  

We denote this continued fraction as $[a_1,\dots , a_r]$ and call it the continued fraction associated with the normal toric surface $X_{\sigma}$.  

From this expression, we define the sequences:  
\begin{equation}\label{suce}
\begin{array}{ccc}
p_{-1}=0, & p_0=1, & p_i = a_i p_{i-1} - p_{i-2}, \quad 1 \leq i \leq r,\\[3pt]
q_0=0, & q_1=1, & q_i = a_i q_{i-1} - q_{i-2}, \quad 2 \leq i \leq r.
\end{array}
\end{equation}

The set $v_i=(p_i,q_i)$ for $0\leq i \leq r$ forms the Hilbert basis of the semigroup \(\check{\sigma} \cap \mathbb{Z}^2\), that is, the set of its irreducible elements \cite[Theorem 4.10]{ata}.

\begin{remark}\label{obs1}
For the continued fraction $[a_1,\dots , a_r]$ associated with a normal toric surface, we have:
\begin{itemize}
    \item $a_1=1$.
    \item $a_i > 1$ for each $i>1$.
\end{itemize}
\end{remark}

\begin{proposition}\cite[Section  4.1]{ata}\label{fc}
Let $[a_1,\dots , a_r]$ be a continued fraction.

\begin{enumerate}[label=(\arabic*)]
    \item For each $i\in \{1,\dots , r\}$, we have $[a_1,\dots , a_i]=\dfrac{p_i}{q_i}$.
    \item For $i>0$, we have $p_{i-1}q_i-p_iq_{i-1}=1$.
    \item For $i<j$, the denominator of the fraction $[a_{i+1},\dots , a_j]$ in lowest terms is $p_iq_j-p_jq_i$.
\end{enumerate}
\end{proposition}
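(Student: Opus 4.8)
The plan is to encode the recursion \eqref{suce} in products of $2\times 2$ matrices and then read off all three assertions from a single identity. For a scalar $a$, set
$$
M(a)=\begin{pmatrix} a & -1\\ 1 & 0\end{pmatrix},
$$
and let matrices act on $\mathbb{P}^1$ by fractional linear transformations, so that $M(a)$ realizes $z\mapsto a-1/z$ and an arbitrary matrix sends $\infty$ to the quotient of the two entries of its first column. An easy induction on $i$ shows
$$
M(a_1)M(a_2)\cdots M(a_i)=\begin{pmatrix} p_i & -p_{i-1}\\ q_i & -q_{i-1}\end{pmatrix},\qquad 1\le i\le r:
$$
the case $i=1$ holds because $p_1=a_1$, $p_0=1$, $q_1=1$, $q_0=0$, and the inductive step is the single multiplication
$$
\begin{pmatrix} p_{i-1} & -p_{i-2}\\ q_{i-1} & -q_{i-2}\end{pmatrix}\begin{pmatrix} a_i & -1\\ 1 & 0\end{pmatrix}=\begin{pmatrix} a_ip_{i-1}-p_{i-2} & -p_{i-1}\\ a_iq_{i-1}-q_{i-2} & -q_{i-1}\end{pmatrix}=\begin{pmatrix} p_i & -p_{i-1}\\ q_i & -q_{i-1}\end{pmatrix},
$$
the last equality being the recursion \eqref{suce}.

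Parts (1) and (2) are then immediate. By definition, $[a_1,\dots,a_i]$ is obtained by applying the composite map $z\mapsto a_1-1/(a_2-1/(\cdots))$, that is, the fractional linear transformation of $M(a_1)\cdots M(a_{i-1})$, to the value $a_i=M(a_i)(\infty)$; hence $[a_1,\dots,a_i]=(M(a_1)\cdots M(a_i))(\infty)=p_i/q_i$, which proves part (1). For part (2), $\det M(a)=1$, so taking determinants in the matrix identity gives $p_{i-1}q_i-p_iq_{i-1}=1$ for every $1\le i\le r$; in particular $\gcd(p_i,q_i)=1$, so $p_i/q_i$ is already in lowest terms.

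For part (3) we may assume $i\ge1$, the case $i=0$ being part (1). Fix $i<j$, put $b_k:=a_{i+k}$ for $1\le k\le j-i$, and let $P_k,Q_k$ be the sequences defined by \eqref{suce} for the continued fraction $[b_1,\dots,b_{j-i}]$. By part (1), $[a_{i+1},\dots,a_j]=[b_1,\dots,b_{j-i}]=P_{j-i}/Q_{j-i}$, and by part (2) applied to the $b$-sequence this fraction is reduced, so $Q_{j-i}$ is exactly the denominator to be identified. Splitting the product $M(a_1)\cdots M(a_j)$ after its $i$-th factor and applying the matrix identity to each half,
$$
\begin{pmatrix} p_j & -p_{j-1}\\ q_j & -q_{j-1}\end{pmatrix}=\begin{pmatrix} p_i & -p_{i-1}\\ q_i & -q_{i-1}\end{pmatrix}\begin{pmatrix} P_{j-i} & -P_{j-i-1}\\ Q_{j-i} & -Q_{j-i-1}\end{pmatrix}.
$$
Comparing first columns gives $p_iP_{j-i}-p_{i-1}Q_{j-i}=p_j$ and $q_iP_{j-i}-q_{i-1}Q_{j-i}=q_j$; eliminating $P_{j-i}$ between these equations and using $p_{i-1}q_i-p_iq_{i-1}=1$ from part (2) yields $Q_{j-i}=p_iq_j-p_jq_i$, as claimed.

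The argument is essentially mechanical, so I do not anticipate a real obstacle. The points deserving attention are the index shift $b_k=a_{i+k}$ in part (3), the correct use of the boundary values $p_{-1}=0$ and $q_0=0$ so that the base case genuinely reproduces $M(a_1)$, and, if one prefers not to presuppose that the continued fractions occurring are well-defined rational numbers, carrying out all computations with fractional linear transformations of $\mathbb{P}^1$, where any intermediate division by zero is harmless. Under the standing hypotheses of this paper, namely $a_1=1$ and $a_i\ge2$ for $i>1$, an easy induction shows that every $q_i$ is positive, so this last point does not even arise.
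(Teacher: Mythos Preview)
The paper does not give its own proof of this proposition; it is stated with a citation to \cite[Section~4.1]{ata} and used as a black box thereafter. So there is nothing to compare against on the paper's side.

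Your argument is correct and is in fact the standard matrix approach to Hirzebruch--Jung continued fractions. The matrix identity $M(a_1)\cdots M(a_i)=\begin{pmatrix}p_i&-p_{i-1}\\q_i&-q_{i-1}\end{pmatrix}$ is verified correctly, and parts (1) and (2) follow exactly as you say. For part (3), your splitting of the product and elimination of $P_{j-i}$ is clean; the computation $Q_{j-i}=p_iq_j-p_jq_i$ checks out, and you correctly note that the $i=0$ case reduces to part (1) together with $p_0=1$, $q_0=0$. The closing remarks about $\mathbb{P}^1$ and positivity of the $q_i$ are accurate and appropriately cautious. Nothing is missing.
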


\subsection{Combinatorial description of the Nash blowup of a toric surface}
From Theorem \ref{Nashtoric}, it is possible to give a combinatorial description of the Nash blowup of a toric variety, following the general approach proposed by P. González and B. Teissier for the blowup of a toric variety along any monomial ideal \cite{GT} . In this paper, we focus on the case of normal toric surfaces.\\

Consider the cone $\check{\sigma}=\mathbb{R}_{\geq 0}\{(1,0),(P,Q)\}$ in its normal  form, and let $\Gamma=\check{\sigma}\cap \mathbb{Z}^2$,  whose Hilbert basis is $\{v_0, v_1,\dots , v_r\}$.\\

The Newton polyhedron associated with $\check{\sigma}$ is 
$$
N_{\sigma}(\Gamma _p)= \operatorname{Conv}\{(v_j+v_k) +\check{\sigma}  \mid \det(v_j \ v_k)\not \equiv 0 \mod{p} \},
$$
where $p=\operatorname{char}(\mathbb{K})\geq0$ and $\operatorname{Conv}$ denotes the convex hull. The vertices of the Newton polyhedron are contained in $\{v_{i-1}+v_i \mid i\in \{1,\dots , r\}\}$, as shown in \cite[Proposition 4.12]{ata} and \cite[Proposition 2.4]{DJN}.\\

The combinatorial description of the Nash blowup and of the normalized Nash blowup of a toric surface (and more generally, of a toric variety) relies fundamentally on the Newton polyhedron and its vertices. According to the order in which the results are presented in this paper, we begin by describing the normalized Nash blowup and subsequently the Nash blowup.\\

\noindent\textbf{Normalized Nash blowup:}

Given a vertex $v_{i-1}+v_i$ of the Newton polyhedron, the localization of the polyhedron at $v_{i-1}+v_i$ is defined as the cone
\begin{eqnarray}\label{conosloc}
    \check{\sigma}(v_{i-1},v_i) = \mathbb{R}_{\geq 0}\bigl(N_{\sigma}(\Gamma_p) - (v_{i-1}+v_i)\bigr).
\end{eqnarray}

In this way, the set
$$
\bigl\{\,X_{\sigma(v_{i-1},v_i)} \;\bigm|\; v_{i-1}+v_i 
\text{ is a vertex of } N_{\sigma}(\Gamma_p)\,\bigr\}
$$  
constitute the affine charts of the normalized Nash blowup \cite{ata, GT, Gonzaleznashnorma}. In the following subsection, these localization cones are described in a more precise manner (see Subsection~\ref{locali}).\\

\noindent \textbf{Nash blowup:}

Given a vertex $v_{i-1}+v_i$ of $N_{\sigma}(\Gamma_0)$, consider  
\begin{eqnarray*}
    A(v_{i-1})=\{v_j-v_{i-1} \mid j\in \{0,\dots, r\}\setminus\{i-1, i\},\;\det(v_j\; v_i)\not = 0\},\\
    A(v_{i})=\{v_k-v_{i} \mid k\in \{0,\dots, r\}\setminus\{i-1, i\},\;\det(v_k\; v_{i-1})\not = 0\}.
\end{eqnarray*}
Let $\Gamma(v_{i-1},v_i)$ be the semigroup generated by $\{v_{i-1},v_i\}\cup  A(v_{i-1})\cup A(v_i)$. Then 
$$\{X_{\Gamma(v_{i-1},v_i) }\mid v_{i-1}+v_i\text{ is a vertex of } N_{\sigma}(\Gamma_0)\}$$
are the affine charts of the Nash blowup of $X_{\sigma}$ \cite{Spivakovsky2020, grig}. \\
\begin{remark}
 The toric surface $X_{\sigma(v_{i-1},v_i)}$ is the normalization of $X_{\Gamma(v_{i-1},v_i)}$. The Newton polyhedron $N_{\sigma}(\Gamma_p)$ does not depend on the characteristic of the field, in other words, $N_{\sigma}(\Gamma_p)=N_{\sigma}(\Gamma_0)$, for all $p>0$ \cite[Proposition 2.4]{DJN}.
 
\end{remark}

\subsection{Vertices of the Newton polyhedron} \label{locali}
As mentioned earlier, the vertices of the Newton polyhedron play a fundamental role in the description of the charts of the Nash blowup. This subsection is devoted to their study.\\

The vertices of $N_{\sigma}(\Gamma_p)$ can be classified into two types.

\begin{itemize}
    \item Interior vertices, which are of the form 
    \(v_{i-1}+v_i\) with \(i \in \{2,\dots , r-1\}\).
    \item Extremal vertices, corresponding to 
    \(v_0+v_1\) and \(v_{r-1}+v_r\).
\end{itemize}

In the case of interior vertices, the localization is given by  
\begin{eqnarray}\label{locint}
\check{\sigma}(v_{i-1},v_i) \;=\; 
\mathbb{R}_{\geq 0}\{\,v_{i-2}-v_i,\; v_{i+1}-v_{i-1}\,\}.
\end{eqnarray}
For extremal vertices, the corresponding cones are  
\begin{eqnarray}\label{locext}
\check{\sigma}(v_0,v_1)=\mathbb{R}_{\geq 0}\{\,v_0,\; v_2-v_0\,\}, 
\qquad 
\check{\sigma}(v_{r-1},v_r)=\mathbb{R}_{\geq 0}\{\,v_{r-2}-v_r,\; v_r\,\}.
\end{eqnarray}

In the following proposition, we describe the vertices of the Newton polyhedron in terms of continued fractions.

\begin{proposition}\label{verti}
Consider the continued fraction $[1,a_2,\dots, a_r]$ associated with a normal toric surface. Then, for each $2\leq i \leq r-1$, $v_{i-1}+v_i$ is not a vertex of $N_{\sigma}(\Gamma_p)$ if and only if $a_i=2$ and $a_{i+1}=2$. 
\end{proposition}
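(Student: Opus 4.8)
The plan is to realize $N_{\sigma}(\Gamma_p)$ explicitly as the convex hull $\operatorname{Conv}\{\,w_j+\check{\sigma}\mid 1\le j\le r\,\}$, where $w_j:=v_{j-1}+v_j$, and then to decide whether a given interior candidate $w_i$ (with $2\le i\le r-1$) is a vertex by a single determinant. First I would note that $\det(v_{j-1}\ v_j)=p_{j-1}q_j-p_jq_{j-1}=1$ by Proposition~\ref{fc}(2); hence $\det(v_{j-1}\ v_j)\not\equiv 0\pmod p$ for every $j$ and every $p=\operatorname{char}(\mathbb{K})\ge 0$, so each $w_j$ occurs among the points defining $N_{\sigma}(\Gamma_p)$. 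Since, as recalled above, the vertices of $N_{\sigma}(\Gamma_p)$ are among $\{w_1,\dots,w_r\}$, this yields $N_{\sigma}(\Gamma_p)=\operatorname{Conv}\{\,w_j+\check{\sigma}\mid 1\le j\le r\,\}$ and, in particular, every $w_i$ lies on the boundary.

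Next I would compute the discrete ``turn'' at $w_i$. From the recursions defining $p_i,q_i$ in \eqref{suce} one gets $v_{i-2}+v_i=a_iv_{i-1}$ and $v_{i-1}+v_{i+1}=a_{i+1}v_i$, hence
\[
w_i-w_{i-1}=v_i-v_{i-2}=2v_i-a_iv_{i-1},\qquad
w_{i+1}-w_i=v_{i+1}-v_{i-1}=a_{i+1}v_i-2v_{i-1}.
\]
Expanding by bilinearity and using $\det(v_{i-1}\ v_i)=1$ gives
\[
\det\bigl(w_i-w_{i-1},\; w_{i+1}-w_i\bigr)=4-a_ia_{i+1}.
\]
By Remark~\ref{obs1} we have $a_i\ge 2$ and $a_{i+1}\ge 2$, so this determinant is $\le 0$, and it vanishes exactly when $a_i=a_{i+1}=2$. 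The same identity applied to every index shows that the whole chain $w_1,\dots,w_r$ turns consistently; this is the convexity on which the rest rests.

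For the two implications: if $a_i=a_{i+1}=2$, then $v_{i-2}=2v_{i-1}-v_i$ and $v_{i+1}=2v_i-v_{i-1}$, so $w_{i-1}+w_{i+1}=2(v_{i-1}+v_i)=2w_i$, and thus $w_i$ is the midpoint of the segment $[w_{i-1},w_{i+1}]\subseteq N_{\sigma}(\Gamma_p)$, hence not a vertex. Conversely, if $a_ia_{i+1}>4$ the determinant above is nonzero, so $w_{i-1},w_i,w_{i+1}$ are not collinear and $w_i$ makes a strict turn along the boundary chain; since the whole chain turns consistently (all the determinants $4-a_ja_{j+1}$ have the same sign) and $w_i$ lies on the boundary, standard planar convex geometry produces a linear functional attaining its minimum over $N_{\sigma}(\Gamma_p)$ uniquely at $w_i$ (pick one in the interior of the two-dimensional normal cone of $N_{\sigma}(\Gamma_p)$ at $w_i$; it is automatically nonnegative on the recession cone $\check{\sigma}$ and strictly larger at every other $w_j$). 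Hence $w_i$ is a vertex.

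I expect the main obstacle to be the converse: one must be sure that a strict local turn at $w_i$ cannot be ``cut off'' by a distant $w_j$ or by the recession cone $\check{\sigma}$, i.e.\ that $w_i$ is not inadvertently an interior point of $\operatorname{Conv}(\{w_j\}_{j\ne i}+\check{\sigma})$. This is precisely what the global sign computation $\det(w_j-w_{j-1},\,w_{j+1}-w_j)=4-a_ja_{j+1}\le 0$ forbids (together with the fact, recorded above, that the $w_j$ already exhaust the possible vertices). As an a posteriori check, once $w_i$ is known to be a vertex the localization cone $\check{\sigma}(v_{i-1},v_i)$ is the strongly convex cone $\mathbb{R}_{\ge 0}\{v_{i-2}-v_i,\;v_{i+1}-v_{i-1}\}$ of \eqref{locint}, whose generators are linearly independent exactly when $4-a_ia_{i+1}\ne 0$.
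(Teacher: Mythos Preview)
Your proof is correct and follows essentially the same route as the paper's: both reduce the question to the vanishing of $\det(v_{i-2}-v_i,\,v_{i+1}-v_{i-1})$ and show this equals (up to sign) $a_ia_{i+1}-4$, which forces $a_i=a_{i+1}=2$ by Remark~\ref{obs1}. The only differences are presentational: the paper asserts the vertex criterion via the localization description~\eqref{locint} and expands the determinant in coordinates using Proposition~\ref{fc}(3), whereas you obtain $4-a_ia_{i+1}$ directly from the recursions $v_{i-2}+v_i=a_iv_{i-1}$, $v_{i-1}+v_{i+1}=a_{i+1}v_i$ and bilinearity, and supply your own justification of the vertex criterion (midpoint for one direction, supporting functional for the other).
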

\begin{proof}
The point $v_{i-1}+v_i$ is not a vertex of $N_{\sigma}(\Gamma_p)$ if and only if the vectors $v_{i-2}-v_i$ and $v_{i+1}-v_{i-1}$ are linearly dependent.

By definition, we have
$$
p_i = a_i p_{i-1} - p_{i-2}, \qquad q_i = a_i q_{i-1} - q_{i-2}.
$$ 
Substituting these expressions into the coordinates of 
$v_{i-2}-v_i = (p_{i-2}-p_i, \, q_{i-2}-q_i)$, we obtain:
\begin{equation}\label{generator1}
    v_{i-2} - v_i = (2 p_{i-2} - a_i p_{i-1}, \, 2 q_{i-2} - a_i q_{i-1}).
\end{equation}

Similarly, using
$$
p_{i+1} = a_{i+1} p_i - p_{i-1}, \qquad q_{i+1} = a_{i+1} q_i - q_{i-1},
$$
and substituting into $v_{i+1}-v_{i-1} = (p_{i+1}-p_{i-1}, \, q_{i+1}-q_{i-1})$, we obtain:
\begin{equation}\label{generator2}
    v_{i+1} - v_{i-1} = (a_{i+1} p_i - 2 p_{i-1}, \, a_{i+1} q_i - 2 q_{i-1}).
\end{equation}

Thus, the vectors $v_{i-2}-v_i$ and $v_{i+1}-v_{i-1}$ are linearly dependent if and only if

$$
\det\begin{pmatrix}
    2p_{i-2} - a_i p_{i-1} & 2q_{i-2} - a_i q_{i-1} \\
    a_{i+1} p_i - 2p_{i-1} & a_{i+1} q_i - 2q_{i-1}
\end{pmatrix} = 0.
$$

Expanding the determinant, we obtain
\begin{equation}\label{ec4}
2 a_{i+1} (p_{i-2} q_i - p_iq_{i-2}) - a_i a_{i+1} (p_{i-1} q_i - p_i q_{i-1}) - 4 (p_{i-2} q_{i-1} - p_iq_{i-2}) = 0.
\end{equation}

By Proposition \ref{fc} (3), the term \(p_{i-2} q_i - p_iq_{i-2} \) corresponds to the denominator in lowest terms of the continued fraction \([a_{i-1},a_i]\), which is equal to \(a_i\). Using Proposition \ref{fc} (2) and substituting into (\ref{ec4}), we conclude that \(v_{i-1}+v_i\) is not a vertex of \(N_{\sigma}(\Gamma_p)\) if and only if \(a_i a_{i+1} = 4\). Since, by Remark \ref{obs1}, \(a_j > 1\) for all \(j > 1\), this is equivalent to saying that \(a_i = a_{i+1} = 2\).
\end{proof}

\begin{remark}
Alternatively, these vertices can also be described in terms of the vertices of the convex hull of the minimal generating set of the corresponding cone, as shown in \cite[Proposition~1.3]{DuarteSnoussi2025}.
\end{remark}


\section{One-step resolution}

In $\mathbb{R}^2$, every strongly convex rational cone of dimension two is generated by two integral vectors. These are called primitive if their coordinates are coprime. In the localization cones, the primitive vectors can be determined directly from the associated continued fraction.\\ 

For the proof of the following result, it is necessary to take Proposition \ref{fc} (2) into account, as it will be used repeatedly without further notice.

\begin{proposition}\label{primitivos}
Let $[1,a_2,\dots,a_r]$ be the continued fraction associated with the normal toric surface $X_{\sigma}$. 
Suppose that $v_{i-1}+v_i$ is a vertex of the Newton polyhedron $N_{\sigma}(\Gamma_p)$. 
Then the primitive generators of the localization cone are given by:  
\begin{enumerate}[label=(\arabic*)]
    \item $(p_{i-2}-m p_{i-1}, \, q_{i-2}-m q_{i-1})$ and $(n p_i - p_{i-1}, \, n q_i - q_{i-1})$, if $a_i=2m$ and $a_{i+1}=2n$.
    \item $(p_{i-2}-m p_{i-1}, \, q_{i-2}-m q_{i-1})$ and $(a_{i+1} p_i - 2p_{i-1}, \, a_{i+1} q_i - 2q_{i-1})$, if $a_i=2m$ and $a_{i+1}=2n+1$.
    \item $(2 p_{i-2} - a_i p_{i-1}, \, 2 q_{i-2} - a_i q_{i-1})$ and $(n p_i - p_{i-1}, \, n q_i - q_{i-1})$, if $a_i=2m+1$ and $a_{i+1}=2n$.
    \item $(2 p_{i-2} - a_i p_{i-1}, \, 2 q_{i-2} - a_i q_{i-1})$ and $(a_{i+1} p_i - 2p_{i-1}, \, a_{i+1} q_i - 2q_{i-1})$, if $a_i=2m+1$ and $a_{i+1}=2n+1$.
\end{enumerate}
\end{proposition}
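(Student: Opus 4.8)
The plan is to start from the explicit data already produced in the proof of Proposition~\ref{verti}. For an interior vertex $v_{i-1}+v_i$ (so that $2\leq i\leq r-1$), the localization cone is $\check\sigma(v_{i-1},v_i)=\mathbb{R}_{\geq 0}\{\,v_{i-2}-v_i,\;v_{i+1}-v_{i-1}\,\}$ by (\ref{locint}), and its two spanning vectors are given coordinatewise by (\ref{generator1}) and (\ref{generator2}). The primitive generator along the ray spanned by a nonzero integer vector $w$ is $w$ divided by the gcd of its two coordinates, so the whole statement reduces to evaluating that gcd for each of these two vectors and then splitting according to the parities of $a_i$ and $a_{i+1}$. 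Throughout I will use that every $v_j=(p_j,q_j)$ is primitive, which follows from Proposition~\ref{fc}(2) --- it gives $p_{j-1}q_j-p_jq_{j-1}=1$, whence $\gcd(p_j,q_j)=1$ --- together with $v_0=(1,0)$.

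For the first spanning vector $v_{i-2}-v_i$ I would pair it with $v_{i-1}$ and expand the $2\times 2$ determinant; after simplifying with Proposition~\ref{fc}(2) one obtains $\det(v_{i-1},\,v_{i-2}-v_i)=2(p_{i-1}q_{i-2}-p_{i-2}q_{i-1})=-2$. Writing $v_{i-2}-v_i=g\,w$ with $g$ the gcd of its coordinates and $w$ primitive, this determinant lies in $g\mathbb{Z}$, so $g\mid 2$. If $a_i=2m$ is even then (\ref{generator1}) gives $v_{i-2}-v_i=2\,(p_{i-2}-mp_{i-1},\,q_{i-2}-mq_{i-1})$, and the bracketed vector is primitive because its determinant with $v_{i-1}$ equals $p_{i-1}q_{i-2}-p_{i-2}q_{i-1}=-1$; hence $g=2$ and the primitive generator is $(p_{i-2}-mp_{i-1},\,q_{i-2}-mq_{i-1})$. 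If $a_i$ is odd, then $g=2$ is impossible: it would make $2p_{i-2}-a_ip_{i-1}$ and $2q_{i-2}-a_iq_{i-1}$ both even, hence $a_ip_{i-1}$ and $a_iq_{i-1}$ both even, hence $p_{i-1}$ and $q_{i-1}$ both even (since $a_i$ is odd), contradicting primitivity of $v_{i-1}$; therefore $g=1$ and $v_{i-2}-v_i=(2p_{i-2}-a_ip_{i-1},\,2q_{i-2}-a_iq_{i-1})$ is itself the primitive generator. The second spanning vector $v_{i+1}-v_{i-1}$ is handled by the mirror computation, now pairing it with $v_i$: Proposition~\ref{fc}(2) gives $\det(v_i,\,v_{i+1}-v_{i-1})=2(p_{i-1}q_i-p_iq_{i-1})=2$, so again the gcd divides $2$; if $a_{i+1}=2n$ is even then $v_{i+1}-v_{i-1}=2\,(np_i-p_{i-1},\,nq_i-q_{i-1})$ with the bracketed vector primitive (its determinant with $v_i$ is $p_{i-1}q_i-p_iq_{i-1}=1$), while if $a_{i+1}$ is odd the same parity obstruction --- now using primitivity of $v_i$ --- forces the gcd to be $1$. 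Combining the two alternatives for the first generator with the two for the second yields precisely the four cases in the statement.

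I expect the only genuinely delicate point to be the odd subcase, namely showing that $v_{i-2}-v_i$ (respectively $v_{i+1}-v_{i-1}$) is already primitive and not twice a lattice point. The parity argument above settles it, and it is exactly the step where the fact that the $v_j$ are Hilbert basis elements --- hence primitive --- is used. Everything else is bookkeeping: one must invoke Proposition~\ref{fc}(2) with the correctly shifted indices, which is what produces the signs $\pm 1$ and $\pm 2$ above. Finally, for the extremal vertices $v_0+v_1$ and $v_{r-1}+v_r$, whose localization cones are $\mathbb{R}_{\geq 0}\{v_0,\,v_2-v_0\}$ and $\mathbb{R}_{\geq 0}\{v_{r-2}-v_r,\,v_r\}$ by (\ref{locext}), one generator is the already-primitive vector $v_0$ (respectively $v_r$) and the other is exactly the $a_2$- (respectively $a_r$-) specialization of the computation above, so no new argument is needed.
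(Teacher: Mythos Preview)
Your proof is correct and follows essentially the same approach as the paper: start from the generators \eqref{generator1} and \eqref{generator2} of the localization cone and verify primitivity using the unimodularity relations of Proposition~\ref{fc}(2). The only difference is in the odd-parameter subcases: the paper exhibits, for each such vector, an explicit companion vector with which the $2\times 2$ determinant equals $1$, whereas you first bound the gcd by $2$ via the determinant with $v_{i-1}$ (respectively $v_i$) and then exclude $2$ by the parity argument using primitivity of $v_{i-1}$ (respectively $v_i$). Both routes are short; yours has the mild advantage of avoiding the need to guess the auxiliary vectors in the odd cases.
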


\begin{proof}
Recall from the proof of Proposition~\ref{verti} the description of the generators of the corresponding localization cone. Note that the vectors \eqref{generator1} and \eqref{generator2} are not necessarily primitive, since this depends on the parity of $a_i$ and $a_{i+1}$. We now examine the different cases that may occur.\\

\medskip
\noindent\textbf{Case 1.} Suppose $a_i = 2m$ and $a_{i+1} = 2n$.  

In this case, $v_{i-2}-v_i$ and $v_{i+1}-v_{i-1}$ are not primitive, since $2$ divides all of their coordinates. Consider instead the vectors
$$
(p_{i-2}-m p_{i-1}, \, q_{i-2}-m q_{i-1}) \quad\text{and}\quad 
(n p_i - p_{i-1}, \, n q_i - q_{i-1}),
$$ 
which generate the same cone. These vectors are primitive, since
\begin{align*}
q_{i-1}(p_{i-2} - m p_{i-1}) - p_{i-1}(q_{i-2} - m q_{i-1}) 
&= p_{i-2} q_{i-1} - p_{i-1} q_{i-2} \\
&= 1, \\
-q_i(n p_i - p_{i-1}) + p_i(n q_i - q_{i-1}) 
&= p_{i-1} q_i - p_i q_{i-1} \\
&= 1.
\end{align*}

\medskip
\noindent\textbf{Case 2.} Suppose $a_i = 2m$ and $a_{i+1} = 2n+1$.  

Here $v_{i-2}-v_i$ is not primitive, so we take the primitive vector
$$(p_{i-2}-m p_{i-1}, \, q_{i-2}-m q_{i-1}).$$
In contrast, 
$$
v_{i+1}-v_{i-1} = ((2n+1)p_i - 2p_{i-1}, \, (2n+1) q_i - 2q_{i-1})
$$ 
is primitive, because
\begin{align*}
&(n q_i - q_{i-1}) \bigl[ (2n+1) p_i - 2 p_{i-1} \bigr] 
- (n p_i - p_{i-1}) \bigl[ (2n+1) q_i - 2 q_{i-1} \bigr] \\
&= (2n+1)(p_{i-1} q_i - p_i q_{i-1}) - 2n (p_{i-1} q_i - p_i q_{i-1}) \\
&= 1.
\end{align*}

\medskip
\noindent\textbf{Case 3.} Suppose $a_i = 2m+1$ and $a_{i+1} = 2n$.  

As in the previous case, we already know the primitive generator corresponding to $v_{i+1}-v_{i-1}$. Now we verify that
$$
(2 p_{i-2} - a_i p_{i-1}, \, 2 q_{i-2} - a_i q_{i-1})
$$
is primitive:
\begin{align*}
&(q_{i-2}-m q_{i-1}) \bigl[ 2 p_{i-2} - (2m+1) p_{i-1} \bigr] 
- (p_{i-2}-m p_{i-1}) \bigl[ 2 q_{i-2} - (2m+1) q_{i-1} \bigr] \\
&= (2m+1)(p_{i-2}q_{i-1}-p_{i-1}q_{i-2}) - 2m(p_{i-2}q_{i-1}-p_{i-1}q_{i-2}) \\
&= 1.
\end{align*}

\medskip
\noindent\textbf{Case 4.} Suppose $a_i = 2m+1$ and $a_{i+1} = 2n+1$.  

The argument is entirely analogous to the previous cases, and one verifies that the vectors
$$
(2 p_{i-2} - a_i p_{i-1}, \, 2 q_{i-2} - a_i q_{i-1}) \quad\text{and}\quad 
(a_{i+1} p_i - 2p_{i-1}, \, a_{i+1} q_i - 2q_{i-1})
$$
are primitive.
\end{proof}

\subsection{One-step resolution via normalized Nash blowup}

\begin{theorem}\label{nashnormal}
Let $\operatorname{char}(\mathbb{K})=p\geq 0$.  
The normalized Nash blowup of a normal toric surface is smooth if and only if its associated continued fraction 
$
[1,a_2,\dots ,a_r]
$
is one of the following: 

\begin{enumerate}[label=(\arabic*)]
    \item $[1,2]$ if $r=2$.
    \item $[1,2,2]$ if $r=3$.
    \item $[1,2,2,2]$, $[1,2,3,2]$, or $[1,2,4,2]$ if $r=4$.
    \item $[1,2,a_3,\dots, a_{r-1},2]$ for $r \geq 5$, where $(a_i,a_{i+1}) \in \{(2,2),(2,3),(2,4),(3,2),(4,2)\}$ for all $3 \leq i \leq r-2$.
\end{enumerate}
\end{theorem}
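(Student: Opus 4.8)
The plan is to test each affine chart of the normalized Nash blowup against the standard smoothness criterion for normal affine toric surfaces: the chart $X_{\sigma(v_{i-1},v_i)}$ is smooth if and only if the localization cone $\check{\sigma}(v_{i-1},v_i)$ is regular, i.e.\ the absolute value of the determinant of the $2\times 2$ matrix formed by its two primitive generators equals $1$. Since the normalized Nash blowup is smooth exactly when every one of its charts is smooth, and since $N_{\sigma}(\Gamma_p)$ and hence its vertices do not depend on $\operatorname{char}(\mathbb{K})$, the whole statement is characteristic-free, and it suffices to run this determinant test over all vertices of $N_{\sigma}(\Gamma_p)$. These are the interior vertices $v_{i-1}+v_i$ with $2\le i\le r-1$ (those for which $(a_i,a_{i+1})\ne(2,2)$, by Proposition~\ref{verti}) together with the two extremal vertices $v_0+v_1$ and $v_{r-1}+v_r$, which are always vertices because $q_{i-1}+q_i$ is strictly increasing in $i$, so $v_0+v_1$ uniquely minimizes the linear functional dual to $v_0$, and symmetrically for $v_{r-1}+v_r$.

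For an interior vertex I would start from the non-primitive generators \eqref{generator1} and \eqref{generator2} of $\check{\sigma}(v_{i-1},v_i)$ and compute $\det(v_{i-2}-v_i,\;v_{i+1}-v_{i-1})$ directly; expanding and using $p_{i-1}q_i-p_iq_{i-1}=1$, $p_{i-2}q_{i-1}-p_{i-1}q_{i-2}=1$ (Proposition~\ref{fc}(2)) and $p_{i-2}q_i-p_iq_{i-2}=a_i$ (Proposition~\ref{fc}(3)), everything collapses to $a_ia_{i+1}-4$. Dividing by the power of $2$ extracted in Proposition~\ref{primitivos} to make the generators primitive, namely $4,2,2,1$ in cases (1)--(4) according to the parities of $a_i,a_{i+1}$, yields the true index of the chart: writing $a_i,a_{i+1}$ in the forms of Proposition~\ref{primitivos}, the four cases produce indices $mn-1$, $m(2n+1)-2$, $n(2m+1)-2$ and $(2m+1)(2n+1)-4$. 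Requiring each to equal $\pm1$ and using $a_j\ge2$ for $j>1$ (so $m,n\ge1$) one finds: in case (4) the index is at least $5$, hence never $1$; in cases (2) and (3) the only solution is $(a_i,a_{i+1})=(2,3)$, resp.\ $(3,2)$; in case (1) one needs $mn=2$ (since $mn=0$ is impossible and $mn=1$ is the non-vertex case $(2,2)$), i.e.\ $(a_i,a_{i+1})\in\{(2,4),(4,2)\}$. Thus an interior vertex yields a smooth chart precisely when $(a_i,a_{i+1})\in\{(2,3),(3,2),(2,4),(4,2)\}$, so all interior charts are smooth if and only if $(a_i,a_{i+1})\in\{(2,2),(2,3),(3,2),(2,4),(4,2)\}$ for every $2\le i\le r-1$.

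For the extremal vertices I would compute from \eqref{locext}. From $v_0=(1,0)$ and, via \eqref{suce}, $v_2-v_0=(a_2-2,a_2)$, the primitive generators of $\check{\sigma}(v_0,v_1)$ are $(1,0)$ and $(a_2-2,a_2)/\gcd(a_2-2,a_2)$, so the index is $a_2/\gcd(a_2-2,a_2)$, which equals $a_2/2$ if $a_2$ is even and $a_2$ if $a_2$ is odd; since $a_2>1$, this is $1$ if and only if $a_2=2$. A symmetric computation with $\check{\sigma}(v_{r-1},v_r)=\mathbb{R}_{\ge0}\{v_{r-2}-v_r,\,v_r\}$, using that $v_r$ is primitive and $\det(v_{r-2}-v_r,v_r)=\det(v_{r-2},v_r)=a_r$ (Proposition~\ref{fc}(3)) together with the parity of $v_{r-2}-v_r$, shows the second extremal chart is smooth if and only if $a_r=2$. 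Collecting the three conditions --- $a_2=2$, $a_r=2$, and $(a_i,a_{i+1})\in\{(2,2),(2,3),(3,2),(2,4),(4,2)\}$ for $2\le i\le r-1$ --- I would finish by unwinding them: for $r=2,3,4$ this is a short finite check producing exactly the fractions in (1)--(3), while for $r\ge5$ the boundary constraints $i=2$ and $i=r-1$ become automatic, since $a_2=2$ together with $(a_3,a_4)\in\{(2,2),(2,3),(3,2),(2,4),(4,2)\}$ forces $a_3\in\{2,3,4\}$ and hence $(a_2,a_3)\in\{(2,2),(2,3),(2,4)\}$, and symmetrically $(a_{r-1},a_r)\in\{(2,2),(3,2),(4,2)\}$; this recovers statement (4).

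The only genuine difficulty is bookkeeping rather than anything conceptual: one must keep the parity-dependent normalization factors straight across the four cases of Proposition~\ref{primitivos}, handle the degenerate small-$r$ situations in which $v_{i-2}$ or $v_{i+1}$ coincides with an endpoint $v_0$ or $v_r$ of the Hilbert basis (in particular the overlap of interior and extremal vertices for $r\le4$), and verify that the extremal points are genuinely vertices for every admissible continued fraction. The substantive content is concentrated in the identity $\det(v_{i-2}-v_i,\,v_{i+1}-v_{i-1})=a_ia_{i+1}-4$ and its division by the appropriate power of $2$; once that is established, the classification reduces to listing which small values make the resulting index $\pm1$.
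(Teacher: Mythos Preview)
Your proposal is correct and follows essentially the same route as the paper: the paper splits the argument into three propositions (the two extremal charts and the interior charts) and runs the same determinant test against the primitive generators supplied by Proposition~\ref{primitivos}, arriving at the identical constraints $a_2=2$, $a_r=2$, and $(a_i,a_{i+1})\in\{(2,3),(3,2),(2,4),(4,2)\}$ for interior vertices. The only organizational difference is that you compute the raw determinant $\det(v_{i-2}-v_i,\,v_{i+1}-v_{i-1})=a_ia_{i+1}-4$ once and then divide by the parity factor $4,2,2,1$, whereas the paper substitutes the primitive generators first and evaluates the determinant separately in each of the four parity cases; this is a mild streamlining but not a different idea.
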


The proof of this theorem will be divided into Propositions \ref{res1}, \ref{res2}, and \ref{res3}. 

\begin{proposition} \label{res1}
Let $[1,a_2,a_3,\dots, a_r]$ be the continued fraction associated to a normal toric surface. Then the localization of $N_{\sigma}(\Gamma_p)$ at the vertex $v_0+v_1$ is smooth if and only if $a_2=2$. 
\end{proposition}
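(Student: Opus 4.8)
The plan is to use the explicit description of the localization cone at the extremal vertex $v_0+v_1$ recorded in \eqref{locext}, namely
$\check{\sigma}(v_0,v_1)=\mathbb{R}_{\geq 0}\{v_0,\;v_2-v_0\}$, together with the standard smoothness criterion for two\-dimensional affine toric varieties: $X_{\sigma(v_0,v_1)}$ is smooth if and only if the two primitive generators of $\check{\sigma}(v_0,v_1)$ form a $\mathbb{Z}$\-basis of $\mathbb{Z}^2$, equivalently the $2\times 2$ determinant they span equals $\pm 1$.

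First I would read off the relevant lattice points from the recursions \eqref{suce}. Since $a_1=1$ we get $p_0=p_1=1$, $q_0=0$, $q_1=1$, hence $v_0=(1,0)$ and $v_2=(p_2,q_2)=(a_2-1,\,a_2)$, so that $v_2-v_0=(a_2-2,\,a_2)$. The vector $v_0$ is already primitive, so the only remaining question is which primitive vector generates the ray $\mathbb{R}_{\geq 0}(v_2-v_0)$, and this depends on the parity of $a_2$: if $a_2=2m$ is even then $v_2-v_0=2(m-1,m)$ and the primitive generator is $(m-1,m)$, whereas if $a_2$ is odd then $a_2-2$ is odd and $v_2-v_0$ is itself primitive.

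Then I would evaluate the determinant in each case. In the even case $\det\begin{pmatrix} 1 & 0 \\ m-1 & m\end{pmatrix}=m$, which is $\pm 1$ exactly when $m=1$, i.e.\ $a_2=2$. In the odd case $\det\begin{pmatrix} 1 & 0 \\ a_2-2 & a_2\end{pmatrix}=a_2$, which is never $\pm 1$ since $a_2>1$ by Remark~\ref{obs1}. Combining the two cases, the localization at $v_0+v_1$ is smooth precisely when $a_2=2$, which is the claim.

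I do not anticipate a serious obstacle: the computation is short once \eqref{locext} is in hand. The only points requiring a little care are the primitivity analysis (selecting the correct primitive generator of the ray through $v_2-v_0$ according to the parity of $a_2$) and invoking Remark~\ref{obs1} to discard the spurious solution $a_2=1$ in the odd case. If one prefers to avoid the case split on parity, one can instead observe that $\det\bigl(v_0\;\; v_2-v_0\bigr)=a_2$ while the primitive generator of $\mathbb{R}_{\geq 0}(v_2-v_0)$ is $(v_2-v_0)/\gcd(a_2-2,a_2)$ with $\gcd(a_2-2,a_2)=\gcd(2,a_2)$; smoothness is then equivalent to $a_2=\gcd(2,a_2)$, which forces $a_2\in\{1,2\}$ and hence $a_2=2$.
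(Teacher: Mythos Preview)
Your proof is correct and follows essentially the same approach as the paper: both compute $v_0=(1,0)$ and $v_2-v_0=(a_2-2,a_2)$ from the recursions, split into cases according to the parity of $a_2$ to identify the primitive generator of the second ray, evaluate the resulting determinant, and invoke Remark~\ref{obs1} to discard $a_2=1$. Your concluding $\gcd$ reformulation is a small extra observation not in the paper, but the core argument is identical.
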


\begin{proof}
According to the definition in (\ref{suce}), we have
$$
\begin{aligned}
    p_0 &= 1,   &\quad p_1 &= 1,     &\quad p_2 &= a_2 - 1, \\
    q_0 &= 0,   &\quad q_1 &= 1,     &\quad q_2 &= a_2.
\end{aligned}
$$
Consequently,
$$
v_0+v_1=(2,1), 
\qquad 
v_1+v_2=(a_2,a_2+1).
$$
Therefore, the localization at the vertex $v_0+v_1$ is the cone 
$$
\mathbb{R}_{\geq 0}\{(1,0),(a_2-2,a_2)\}.
$$

As in the proof of Proposition \ref{primitivos}, the primitiveness of the vector $(a_2-2,a_2)$ depends on the parity of $a_2$. 

\medskip
\noindent\textbf{Case 1.} If $a_2=2n$, then $(a_2-2,a_2)=(2n-2,2n)$ is not primitive, and we replace it by $(n-1,n)$. The resulting cone is smooth if and only if 
$$
\det \begin{pmatrix}
    1 & 0\\
    n-1 & n
\end{pmatrix}=1,
$$
which occurs if and only if $n=1$, that is, $a_2=2$. 

\medskip
\noindent\textbf{Case 2.} If $a_2=2n+1$, then $(a_2-2,a_2)=(2n-1,2n+1)$ is primitive. In this case, the cone is smooth if and only if 
$$
\det \begin{pmatrix}
    1 & 0\\
    2n-1 & 2n+1
\end{pmatrix}=1,
$$
which is equivalent to $a_2=1$. However, this possibility is excluded since $a_2>1$.  

\medskip
Thus, the only possibility for the cone to be smooth is $a_2=2$. 
\end{proof}

\begin{proposition}\label{res2}
Let $[1, a_2, \dots, a_r]$ be a continued fraction associated with a normal toric surface. The localization of $N_{\sigma}(\Gamma_p)$ at the vertex $v_{r-1} + v_r$ is smooth if and only if $a_r = 2$.
\end{proposition}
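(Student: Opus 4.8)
The plan is to follow the strategy of Proposition~\ref{res1} at the opposite extremal vertex. Recall from \eqref{locext} that the localization of $N_{\sigma}(\Gamma_p)$ at $v_{r-1}+v_r$ is the cone $\check{\sigma}(v_{r-1},v_r)=\mathbb{R}_{\geq 0}\{v_{r-2}-v_r,\ v_r\}$, so I first write these two generators explicitly. By Proposition~\ref{fc}(2) with $i=r$ we have $p_{r-1}q_r-p_rq_{r-1}=1$, hence $\gcd(p_r,q_r)=1$ and $v_r=(p_r,q_r)$ is already primitive. Substituting $p_r=a_rp_{r-1}-p_{r-2}$ and $q_r=a_rq_{r-1}-q_{r-2}$ into $v_{r-2}-v_r$ gives $v_{r-2}-v_r=(2p_{r-2}-a_rp_{r-1},\ 2q_{r-2}-a_rq_{r-1})$, which is exactly the vector \eqref{generator1} with $i=r$.

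Next I split according to the parity of $a_r$, exactly as in the proof of Proposition~\ref{primitivos}. If $a_r=2m$, then $v_{r-2}-v_r$ is divisible by $2$ and must be replaced by the primitive generator $(p_{r-2}-mp_{r-1},\ q_{r-2}-mq_{r-1})$, whose primitivity is verified by the same identity $p_{r-2}q_{r-1}-p_{r-1}q_{r-2}=1$ used in Proposition~\ref{primitivos}; if $a_r=2m+1$, the vector $v_{r-2}-v_r$ is itself primitive. In either case the cone $\check{\sigma}(v_{r-1},v_r)$ is smooth if and only if the determinant of the pair of primitive generators has absolute value $1$.

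The computation of that determinant is the only real step, and it is short. Pairing $v_r=(p_r,q_r)$ with the primitive first generator and expanding, every quadratic term in the $p$'s and $q$'s cancels, and using $p_{r-2}q_{r-1}-p_{r-1}q_{r-2}=1$ (Proposition~\ref{fc}(2) with $i=r-1$) one is left with $\pm m$ in the case $a_r=2m$ and with $\pm(2m+1)$ in the case $a_r=2m+1$. Since smoothness depends only on the absolute value, the signs are irrelevant. Thus in the even case $|{\pm m}|=1$ forces $m=1$, i.e.\ $a_r=2$, while in the odd case $|{\pm(2m+1)}|=1$ would force $a_r=1$, which is excluded by Remark~\ref{obs1} ($a_r>1$ since $r>1$). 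Therefore the localization at $v_{r-1}+v_r$ is smooth if and only if $a_r=2$.

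I do not anticipate a genuine obstacle here: the only nontrivial inputs are the parity bookkeeping of Proposition~\ref{primitivos} and the Plücker-type identity of Proposition~\ref{fc}(2), and the determinant collapses cleanly once the correct primitive generators are identified. The one point requiring mild care is tracking the orientation of the cone's generators, but since only the absolute value of the determinant matters for smoothness, this causes no difficulty.
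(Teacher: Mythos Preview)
Your proposal is correct and follows essentially the same approach as the paper: identify the primitive generators of $\check{\sigma}(v_{r-1},v_r)$ according to the parity of $a_r$, compute the determinant, and reduce to $m=1$ in the even case and to the forbidden $a_r=1$ in the odd case. The only cosmetic difference is that the paper evaluates $p_{r-2}q_r-p_rq_{r-2}$ via Proposition~\ref{fc}(3) (it equals $a_r$), whereas you substitute the recursion for $p_r,q_r$ and cancel directly using Proposition~\ref{fc}(2) with $i=r-1$; both lead to the same determinant value.
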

\begin{proof}
As in (\ref{locext}), the localization of the Newton polyhedron at the vertex $v_{r-1}+v_r$ is given by the cone
$$
\check{\sigma}(v_{r-1}+v_r)
=\mathbb{R}_{\geq 0}\{\,v_{r-2}-v_r,\,(p_r,q_r)\,\}.
$$
Observe that $(p_r,q_r)=(P,Q)$, so $(p_r,q_r)$ is a primitive vector. Using (\ref{generator1}), we have:
$$
\check{\sigma}(v_{r-1}+v_r)
=\mathbb{R}_{\geq 0}\{(2p_{r-2}-a_rp_{r-1},\,2q_{r-2}-a_rq_{r-1}),\,(p_r,q_r)\}.
$$
Hence, the primitiveness of the first generator depends on the parity of $a_r$, which leads us to consider two cases.

\medskip
\noindent\textbf{Case 1.} Suppose that $a_r=2n$.  
In this case, the first generator of the cone is not primitive. As in Proposition \ref{primitivos}, we replace it by the vector
$$
(p_{r-2}-np_{r-1},\,q_{r-2}-nq_{r-1}),
$$
which is primitive. The cone is smooth if and only if
$$
\det \begin{pmatrix}
    p_{r-2}-np_{r-1} & q_{r-2}-nq_{r-1}\\
    p_r & q_r
\end{pmatrix}=1.
$$
Expanding the determinant, this condition becomes
$$
(p_{r-2}q_r-p_rq_{r-2})-n(p_{r-1}q_r-p_rq_{r-1})=1.
$$
By Proposition \ref{fc} (2) and (3), we conclude that the cone is smooth if and only if $a_r-n=1$. Therefore, the cone is smooth precisely when $a_r=2$.

\medskip
\noindent\textbf{Case 2.} Suppose that $a_r=2n+1$.  
Here, the first generator is already primitive. In this case, the cone is smooth if and only if
$$
2(p_{r-2}q_r-p_rq_{r-2})-a_r(p_{r-1}q_r-p_rq_{r-1})=1,
$$
which would force $a_r=1$. However, this is impossible due to Remark \ref{obs1}. We therefore conclude that this case does not occur.
\end{proof}

\begin{proposition}\label{res3}
Let $[1,a_2,\dots,a_{i-1},a_i,a_{i+1},\dots,a_r]$ be the continued fraction associated to a normal toric surface, such that $v_{i-1}+v_i$ is a vertex of $N_{\sigma}(\Gamma_p)$ for some $2\leq i\leq r-1$ and $r \geq 4$. Then the localization at $v_{i-1}+v_i$ is smooth if and only if one of the following conditions holds:
    \begin{enumerate}[label=(\arabic*)]
        \item $a_i=2$ and $a_{i+1}=3$.
        \item $a_i=2$ and $a_{i+1}=4$.
        \item $a_i=3$ and $a_{i+1}=2$.
        \item $a_i=4$ and $a_{i+1}=2$.
    \end{enumerate}
\end{proposition}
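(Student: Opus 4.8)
The plan is to invoke Proposition~\ref{primitivos} directly. Since $v_{i-1}+v_i$ is a vertex, the localization cone $\check{\sigma}(v_{i-1},v_i)$ is genuinely two–dimensional, and Proposition~\ref{primitivos} gives its two primitive generators explicitly, in four cases according to the parities of $a_i$ and $a_{i+1}$. A two–dimensional cone is smooth exactly when its primitive generators form a $\mathbb{Z}$–basis of $\mathbb{Z}^2$, that is, when the determinant of the $2\times 2$ matrix they form has absolute value $1$. So in each of the four cases I would expand that determinant and simplify.

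All four expansions collapse under the same identities. Write $a_i=2m$ or $a_i=2m+1$, and $a_{i+1}=2n$ or $a_{i+1}=2n+1$. Using Proposition~\ref{fc}(2) in the forms $p_{i-1}q_i-p_iq_{i-1}=1$ and $p_{i-2}q_{i-1}-p_{i-1}q_{i-2}=1$, together with Proposition~\ref{fc}(3) applied to $[a_{i-1},a_i]=(a_{i-1}a_i-1)/a_i$—already in lowest terms, since $\gcd(a_{i-1}a_i-1,a_i)=1$—which yields $p_{i-2}q_i-p_iq_{i-2}=a_i$, one finds that the determinant equals, up to the sign fixed by the ordering of the generators, $mn-1$ when $a_i=2m$ and $a_{i+1}=2n$; $m(2n+1)-2$ when $a_i=2m$ and $a_{i+1}=2n+1$; $n(2m+1)-2$ when $a_i=2m+1$ and $a_{i+1}=2n$; and $(2m+1)(2n+1)-4$ when $a_i=2m+1$ and $a_{i+1}=2n+1$.

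It then remains to impose that the absolute value of each of these expressions equals $1$ and to solve the resulting elementary equations, discarding every solution that violates Remark~\ref{obs1} (so $m,n\ge 1$, and $2m+1,2n+1\ge 3$ in the odd cases). The first case forces $mn=2$, hence $(a_i,a_{i+1})\in\{(2,4),(4,2)\}$; the second forces $m(2n+1)=3$, hence $(a_i,a_{i+1})=(2,3)$; the third forces $n(2m+1)=3$, hence $(a_i,a_{i+1})=(3,2)$; and the fourth forces $(2m+1)(2n+1)\in\{3,5\}$, which is impossible when both factors are at least $3$. This produces exactly the four pairs in the statement. The pair $(2,2)$ does not occur, in accordance with Proposition~\ref{verti}: with $m=n=1$ the first determinant is $0$, reflecting that $v_{i-1}+v_i$ is then not a vertex.

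I do not expect a serious obstacle here: the argument is a bounded case analysis. The only points requiring care are the parity bookkeeping in applying Proposition~\ref{primitivos}, the verification that the relevant subfraction denominator is $a_i$ (and not $a_{i-1}$), so that the determinants telescope to the clean forms above, and the observation—exactly as in Propositions~\ref{res1} and~\ref{res2}—that smoothness corresponds to determinant $\pm 1$ rather than to a fixed sign.
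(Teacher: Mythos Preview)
Your proposal is correct and follows essentially the same route as the paper: both arguments split into the four parity cases of Proposition~\ref{primitivos}, compute the determinant of the primitive generators, and collapse it via Proposition~\ref{fc}(2),(3) (in particular the identity $p_{i-2}q_i-p_iq_{i-2}=a_i$) to the same elementary Diophantine conditions. The only cosmetic difference is that you phrase smoothness as $|\det|=1$ while the paper fixes the orientation and writes $\det=1$; since the expressions $mn-1$, $m(2n+1)-2$, etc.\ are nonnegative under the constraints $m,n\ge 1$, this makes no practical difference.
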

\begin{proof}
As in (\ref{locint}), the localization at the vertex $v_{i-1}+v_i$ is given by
$$
\check{\sigma}(v_{i-1} + v_i) 
= \mathbb{R}_{\geq 0}\{\,v_{i-2} - v_i,\; v_{i+1} - v_{i-1}\,\},
$$ 
where
\begin{align*}
    v_{i-2}-v_i &= (2p_{i-2} - a_i p_{i-1}, \, 2q_{i-2} - a_i q_{i-1}), \\
    v_{i+1}-v_{i-1} &= (a_{i+1} p_i - 2p_{i-1}, \, a_{i+1} q_i - 2q_{i-1}).
\end{align*}
As before, the analysis depends on the parity of $a_i$ and $a_{i+1}$. We consider the following cases.

\medskip
\noindent\textbf{Case 1.} $a_i=2m$ and $a_{i+1}=2n$.  
According to Proposition \ref{primitivos} (1), the primitive generators are
$$
(p_{i-2}-mp_{i-1},\,q_{i-2}-mq_{i-1})
\quad\text{and}\quad
(np_i-p_{i-1},\,nq_i-q_{i-1}).
$$
Hence, the cone is smooth if and only if
$$
\det\begin{pmatrix}
p_{i-2} - m p_{i-1} & q_{i-2} - m q_{i-1} \\
n p_i - p_{i-1} & n q_i - q_{i-1}
\end{pmatrix} = 1,
$$
which reduces to
$$
n(p_{i-2}q_i-p_iq_{i-2})-nm(p_{i-1}q_i-p_iq_{i-1})-(p_{i-2}q_{i-1}-p_{i-1}q_{i-2})=1.
$$
By Proposition \ref{fc} (2) and (3), this is equivalent to $2=na_i-nm$. We conclude that the cone is smooth if and only if $(a_i,a_{i+1})=(2,4)$ or $(4,2)$.

\medskip
\noindent\textbf{Case 2.} $a_i=2m$ and $a_{i+1}=2n+1$.  
By Proposition \ref{primitivos} (2), the primitive generators are
$$
(p_{i-2}-mp_{i-1},\,q_{i-2}-mq_{i-1})
\quad\text{and}\quad
(a_{i+1}p_i-2p_{i-1},\,a_{i+1}q_i-2q_{i-1}).
$$
The cone is smooth if and only if
$$
\det\begin{pmatrix}
    p_{i-2} - m p_{i-1} & q_{i-2} - m q_{i-1} \\
a_{i+1} p_i - 2p_{i-1} & a_{i+1} q_i - 2q_{i-1}
\end{pmatrix} = 1,
$$
which becomes
$$
a_{i+1}(p_{i-2}q_i-p_iq_{i-2})-a_{i+1}m(p_{i-1}q_i-p_iq_{i-1})-2(p_{i-2}q_{i-1}-p_{i-1}q_{i-2})=1.
$$
Using Proposition~\ref{fc} again, this condition
is equivalent to $a_i a_{i+1}-a_{i+1}m=3$, 
which occurs exactly when $(a_i,a_{i+1})=(2,3)$.

\medskip
\noindent\textbf{Case 3.} $a_i=2m+1$ and $a_{i+1}=2n$.  
In this case, the cone is smooth if and only if
$$
\det\begin{pmatrix}
     2p_{i-2} - a_i p_{i-1} & 2q_{i-2} - a_i q_{i-1} \\
n p_i - p_{i-1} & n q_i - q_{i-1}
\end{pmatrix} =1,
$$
that is,
$$
2n(p_{i-2}q_i-p_iq_{i-2})-a_in(p_{i-1}q_i-p_iq_{i-1})-2(p_{i-2}q_{i-1}-p_{i-1}q_{i-2})=1.
$$
Applying Proposition \ref{fc} once more, we get the condition $a_in=3$, so the cone is smooth exactly when $(a_i,a_{i+1})=(3,2)$.

\medskip
\noindent\textbf{Case 4.} $a_i=2m+1$ and $a_{i+1}=2n+1$.  
A similar computation shows that the smoothness condition would require $a_ia_{i+1}=5$, which would force $(a_i,a_{i+1})=(1,5)$ or $(5,1)$. However, $a_i>1$ for all $i>1$, so this case is impossible.

\medskip

In conclusion, the localization at $v_{i-1}+v_i$ is smooth if and only if one of the cases listed in the proposition holds.
\end{proof}

\begin{proof}[Proof of Theorem \ref{nashnormal}]
Let $X_{\sigma}$ be a normal toric surface, and let $[1,a_2,\dots ,a_r]$ be its associated continued fraction.  
We shall determine the possible forms of this fraction according to its length $r$.

\begin{itemize}
   \item \textbf{Case $r=2$.}  
In this case, the Newton polyhedron has only two vertices, both of which are extremal: $v_0 + v_1$ and $v_1 + v_2$.  
By Propositions~\ref{res1} and~\ref{res2}, both localizations are smooth if and only if the continued fraction is of the form $[1,2]$.

    \item \textbf{Case $r=3$.}  
    Again, by Propositions \ref{res1} and \ref{res2}, the normalized Nash blowup of $X_{\sigma}$ is smooth if and only if the continued fraction is $[1,2,2]$.

\item \textbf{Case $r=4$.}  
In this case there are exactly four possible vertices of the Newton polyhedron:
$$
v_0+v_1,\qquad v_1+v_2,\qquad v_2+v_3,\qquad v_3+v_4.
$$

To ensure smoothness in the first and last localization, corresponding to $v_0+v_1$ and $v_3+v_4$, Propositions~\ref{res1} and \ref{res2} imply, and are in fact equivalent to requiring, that $a_2 = 2$ and $a_4 = 2$.

Therefore, smoothness in the remaining two localizations depends only on the value of $a_3$.
Proposition~\ref{verti} states that $v_2+v_3$ is not a vertex of $N_\sigma(\Gamma_p)$ if and only if $a_3 = 2$ and $a_4 = 2$.

Since we already know that $a_4 = 2$, it follows that $a_3 = 2$ if and only if the point $v_2+v_3$ is not a vertex of the Newton polyhedron. In this case, the only two relevant localizations are smooth, and therefore the normalized Nash blowup is smooth. This corresponds to the continued fraction $[1,2,2,2]$.\\
On the other hand, if $v_2+v_3$ \emph{is} a vertex, then Proposition~\ref{res3}, together with $a_4 = 2$, implies that the only possible values for $a_3$ are $3$ or $4$. In both cases, the previous propositions ensure that the corresponding localizations are smooth, and again the normalized Nash blowup is smooth.\\

In conclusion, for a continued fraction of length $4$, the normalized Nash blowup is smooth if and only if the associated continued fraction is
$$
[1,2,2,2],\qquad [1,2,3,2],\qquad \text{or} \qquad [1,2,4,2].
$$

    \item \textbf{Case $r \geq 5$.} 
As in the previous cases, the localizations corresponding to the extremal vertices are smooth if and only if $a_2 = 2$ and $a_r = 2$. For the intermediate indices, it is enough to determine whether the point $v_{i-1}+v_i$ is a vertex of the Newton polyhedron. As before, this point is not a vertex if and only if $(a_i,a_{i+1})=(2,2)$; otherwise, the localization at $v_{i-1}+v_i$ is smooth if and only if
$$
(a_i,a_{i+1}) \in \{(2,3),(2,4),(3,2),(4,2)\}.
$$

Therefore, for continued fractions of length $r \ge 5$, the normalized Nash blowup is smooth if and only if the associated continued fraction is of the form
$$
[1,2,a_3,a_4,\dots,a_{r-1},2],
\qquad \text{with } (a_i,a_{i+1}) \in \{(2,2),(2,3),(2,4),(3,2),(4,2)\}.
$$
\end{itemize}
\end{proof}


\subsection{One-step resolution via Nash blowup}

In this subsection, we analyze the non-normalized Nash blowup. In this case, 
we do not obtain the same result as in Theorem \ref{nashnormal};
a key difference appears in the cases $(a_{i-1},a_i)\in\{(2,4),(4,2)\}$ (see Example \ref{ejemplo}). Here we determine the conditions under which an analogous result holds.
Unlike the previous subsection, the results presented here are formulated in characteristic zero.

In Section \ref{descri}, 
we described the semigroups associated with 
the affine charts of the Nash blowup. 
Given a vertex $v_{i-1}+v_i$ of $N_{\sigma}(\Gamma_0)$, we denote by $\Gamma(v_{i-1},v_i)$ 
the semigroup generated by
$$
\{\, v_{i-1}, v_i \,\} \cup A(v_{i-1}) \cup A(v_i), 
$$
where
\begin{align*}
    A(v_{i-1}) &= \{\, v_j - v_{i-1} \;\mid\; 
    j\in \{0,\dots, r\}\setminus \{\,i-1,i\,\},\; \det(v_j \, v_i)\neq 0 \,\},\\ 
    A(v_{i}) &= \{\, v_k - v_i \;\mid\; 
    k\in \{0,\dots, r\}\setminus \{\,i-1,i\,\},\; \det(v_k \, v_{i-1})\neq 0 \,\}. 
\end{align*}

Note that $\det(v_j\, v_i)=p_j q_i - p_i q_j$.  
By Proposition \ref{fc} (3), this determinant is nonzero.
Hence, in characteristic zero, the previous sets simplify to
\begin{align*}
    A(v_{i-1}) &= \{\, v_j - v_{i-1} \;\mid\; 
    j\in \{0,\dots, r\}\setminus \{\,i-1,i\,\} \,\},\\ 
    A(v_{i}) &= \{\, v_k - v_i \;\mid\; 
    k\in \{0,\dots, r\}\setminus \{\,i-1,i\,\} \,\}.
\end{align*}

\begin{example}\label{ejemplo}
Consider the continued fraction $[1,2,4,2]$. In this case we have
$$
v_0 = (1,0),\quad 
v_1 = (1,1),\quad 
v_2 = (1,2),\quad 
v_3 = (3,7),\quad 
v_4 = (5,12). 
$$
Observe that $v_1 + v_2$ is a vertex of the Newton polyhedron; thus,
$$
A(v_1)=\{(0,-1), (2,6), (4,11)\},\qquad 
A(v_2)=\{(0,-2), (2,5), (4,11)\}.
$$

After removing redundancies,
the semigroup $\Gamma(v_1,v_2)$ is generated by 
$$
\{(0,-1), (1,2), (2,6)\}. 
$$

\noindent On the other hand, the localization of the Newton polyhedron $\check{\sigma}(v_1,v_2)$ 
has as primitive generators the vectors $(0,-1)$ and $(1,3)$, 
since $(1,3)$ is the primitive vector in the direction of $(2,6)$. 

\noindent Notice that the primitive generator $(1,3)$ does not belong to $\Gamma(v_1,v_2)$, 
because it cannot be written as a nonnegative integer combination of the generators above. 
Therefore, this semigroup is not saturated, and consequently the toric surface 
$X_{\Gamma(v_1,v_2)}$ cannot be smooth. Hence, the non-normalized Nash blowup of the toric 
variety associated with the continued fraction $[1,2,4,2]$ is not smooth. 
\end{example}

The previous example shows that the behavior observed in the normalized case does not extend 
to the non-normalized Nash blowup. We now state the second main result of this work, 
which describes precisely which continued fractions give rise to a smooth Nash blowup. 

\begin{theorem}\label{nash}
 Assume $\operatorname{char}(\mathbb{K})=0$.  
The Nash blowup of a normal toric surface is smooth if and only if its associated continued fraction 
\(
[1,a_2,\dots,a_r]
\) 
is one of the following:

\begin{enumerate}[label=(\arabic*)]
    \item $[1,2]$ if $r=2$.
    \item $[1,2,2]$ if $r=3$.
    \item $[1,2,2,2]$ or  $[1,2,3,2]$, if $r=4$.
    \item $[1,2,a_3,\dots, a_{r-1},2]$ for $r \geq 5$, where $(a_i,a_{i+1}) \in \{(2,2),(2,3),(3,2)\}$ for all $3 \leq i \leq r-2$.
\end{enumerate}
\end{theorem}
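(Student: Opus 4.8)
The plan is to combine Theorem~\ref{nashnormal} with a saturation analysis of the semigroups $\Gamma(v_{i-1},v_i)$. The starting observation is that, for a two-dimensional affine toric variety, $X_{\Gamma(v_{i-1},v_i)}$ is smooth if and only if $\check{\sigma}(v_{i-1},v_i)$ is a smooth cone \emph{and} $\Gamma(v_{i-1},v_i)$ is saturated: a smooth variety is normal, so a smooth $X_{\Gamma(v_{i-1},v_i)}$ coincides with its normalization $X_{\sigma(v_{i-1},v_i)}$, which forces $\check{\sigma}(v_{i-1},v_i)$ to be smooth and $\Gamma(v_{i-1},v_i)=\check{\sigma}(v_{i-1},v_i)\cap\mathbb{Z}^2$; the converse is immediate. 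Since the affine charts of the Nash blowup are the $X_{\Gamma(v_{i-1},v_i)}$ ranging over the vertices $v_{i-1}+v_i$ of $N_\sigma(\Gamma_0)$, the Nash blowup is smooth if and only if all localization cones are smooth and all the semigroups $\Gamma(v_{i-1},v_i)$ are saturated. By Theorem~\ref{nashnormal} the first condition singles out exactly the continued fractions listed there, so the task reduces to deciding, among those, for which ones every $\Gamma(v_{i-1},v_i)$ is saturated.

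For the saturation step, fix a vertex $v_{i-1}+v_i$ with smooth localization cone and let $w_1,w_2$ be the primitive ray generators given by Proposition~\ref{primitivos} (or by Propositions~\ref{res1} and~\ref{res2} at the extremal vertices); since the cone is smooth it equals $\mathbb{Z}_{\geq0}w_1+\mathbb{Z}_{\geq0}w_2$, so $\Gamma(v_{i-1},v_i)$ is saturated if and only if $w_1,w_2\in\Gamma(v_{i-1},v_i)$, and a short argument using smoothness of the cone shows this happens precisely when each $w_j$ is \emph{itself} one of the listed generators $v_{i-1}$, $v_i$, $v_\ell-v_{i-1}$, $v_\ell-v_i$. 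I would then run through the finitely many patterns permitted by Theorem~\ref{nashnormal}. At the extremal vertices one checks directly that $\{w_1,w_2\}=\{v_0,\,v_2-v_1\}$ when $a_2=2$, and $\{w_1,w_2\}=\{v_r,\,v_{r-2}-v_{r-1}\}$ when $a_r=2$, so these charts are always smooth. For an interior vertex, $(a_i,a_{i+1})=(2,2)$ is excluded (it is not a vertex); for $(a_i,a_{i+1})=(2,3)$ one gets $w_1=v_{i-2}-v_{i-1}$ and $w_2=v_{i+1}-v_{i-1}$, and for $(a_i,a_{i+1})=(3,2)$ one gets $w_1=v_{i-2}-v_i$ and $w_2=v_{i+1}-v_i$, all genuine generators, so the semigroup is saturated; but for $(a_i,a_{i+1})=(2,4)$ one has $v_{i+1}-v_{i-1}=2w_2$ with $w_2=(2p_i-p_{i-1},2q_i-q_{i-1})$, and $w_2+v_{i-1}=2v_i$, $w_2+v_i=3v_i-v_{i-1}$ are not Hilbert basis elements, so $w_2$ is not a generator and $\Gamma(v_{i-1},v_i)$ is not saturated (this generalizes Example~\ref{ejemplo}); the case $(4,2)$ is symmetric, with $w_1$ playing the role of $w_2$. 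Hence, among the fractions of Theorem~\ref{nashnormal}, all semigroups are saturated exactly when no entry equals $4$, i.e.\ when all pairs $(a_i,a_{i+1})$ lie in $\{(2,2),(2,3),(3,2)\}$.

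The main obstacle I anticipate is the rigorous exclusion in the $(2,4)$/$(4,2)$ cases: one must rule out that $w_2$ (respectively $w_1$) coincides with some generator $v_\ell-v_{i-1}$ or $v_\ell-v_i$ for an index $\ell$ \emph{far} from $i$, not just for the nearby indices computed in Example~\ref{ejemplo}. I would handle this using the standard fact that $\{v_0,\dots,v_r\}$ consists exactly of the lattice points on the compact faces of the boundary of $\operatorname{Conv}\bigl((\check{\sigma}\cap\mathbb{Z}^2)\setminus\{0\}\bigr)$ --- so no $v_\ell$ is a proper multiple of a lattice vector and none lies strictly inside that convex hull --- together with the monotonicity of $(p_\ell)$ and $(q_\ell)$ from \eqref{suce}, which together show that points such as $2v_i$ or $3v_i-v_{i-1}$ can never be among the $v_\ell$. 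Once this is settled, the conclusion follows by the same length-by-length bookkeeping as in the proof of Theorem~\ref{nashnormal}: the extremal constraints give $a_2=a_r=2$, each non-vertex forces a pair $(2,2)$, and at each interior vertex smoothness of the cone together with saturation of the semigroup leaves only the pairs $(2,3)$ and $(3,2)$; thus $[1,2,4,2]$ and every longer fraction containing a $4$ is eliminated, yielding precisely the list in the statement.
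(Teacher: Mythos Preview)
Your proposal is correct and follows essentially the same route as the paper: reduce to Theorem~\ref{nashnormal} via the observation that $X_{\Gamma(v_{i-1},v_i)}$ is smooth iff the cone $\check\sigma(v_{i-1},v_i)$ is smooth and $\Gamma(v_{i-1},v_i)$ is saturated, then do a case analysis on the pairs $(a_i,a_{i+1})$ allowed by that theorem. Your identification of the primitive generators in the extremal, $(2,3)$ and $(3,2)$ cases matches the paper's Propositions~\ref{res4} and~\ref{res5}, and your treatment of $(2,4)/(4,2)$ is the content of Proposition~\ref{res6}.

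The only noteworthy difference is in the exclusion argument for $(2,4)/(4,2)$. You propose to rule out $w_2+v_{i-1}=2v_i$ and $w_2+v_i=3v_i-v_{i-1}$ being Hilbert basis elements via the convex-hull description of $\{v_0,\dots,v_r\}$ and monotonicity of $(p_\ell),(q_\ell)$. The paper instead uses irreducibility directly: $2v_i$ is visibly reducible, and if $3v_i-v_{i-1}=v_j$ for some $j$, then $v_{i+1}=4v_i-v_{i-1}=v_j+v_i$ contradicts the irreducibility of $v_{i+1}$. This is shorter and avoids any appeal to the geometry of the boundary polygon, so you may want to substitute it for the argument you sketched.
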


As before, the proof of this result will be divided into several steps.

\begin{proposition}\label{res4}
Let $[1,a_2,\dots , a_r]$ be the continued fraction associated to the normal toric surface $X_{\sigma}$. 
Suppose that $v_{i-1}+v_i$ is a vertex of the Newton polyhedron $N_{\sigma}(\Gamma_0)$ for $2<i<r-1$. 
If $(a_i,a_{i+1})\in \{(2,3),(3,2)\}$, then the affine chart of the Nash blowup $X_{\Gamma(v_{i-1},v_i)}$ is smooth.
\end{proposition}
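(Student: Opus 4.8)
The plan is to prove that, under the hypothesis $(a_i,a_{i+1})\in\{(2,3),(3,2)\}$, the semigroup $\Gamma(v_{i-1},v_i)$ is already \emph{saturated}, so that the chart $X_{\Gamma(v_{i-1},v_i)}$ coincides with its normalization $X_{\sigma(v_{i-1},v_i)}$. By Proposition~\ref{res3} (Cases~2 and~3) the cone $\check{\sigma}(v_{i-1},v_i)$ is regular in exactly these two situations, hence $X_{\sigma(v_{i-1},v_i)}$ is smooth and the saturation of $\Gamma(v_{i-1},v_i)$ is the free semigroup $\mathbb{Z}_{\ge 0}u_1+\mathbb{Z}_{\ge 0}u_2$ on the two primitive ray generators $u_1,u_2$ of $\check{\sigma}(v_{i-1},v_i)$ given in Proposition~\ref{primitivos}. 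Since $\Gamma(v_{i-1},v_i)$ is always contained in its saturation, and since $\mathbb{Z}\Gamma(v_{i-1},v_i)=\mathbb{Z}^2$ (already $\det(v_{i-1}\ v_i)=1$ by Proposition~\ref{fc}(2)), it is enough to establish the reverse inclusion: that $u_1$ and $u_2$ themselves occur among the generators $\{v_{i-1},v_i\}\cup A(v_{i-1})\cup A(v_i)$ of $\Gamma(v_{i-1},v_i)$.

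The core of the argument is this identification, obtained by substituting the recurrences of~\eqref{suce}. If $(a_i,a_{i+1})=(2,3)$, Proposition~\ref{primitivos}(2) gives $u_1=(p_{i-2}-p_{i-1},\,q_{i-2}-q_{i-1})$ and $u_2=(3p_i-2p_{i-1},\,3q_i-2q_{i-1})$; using $p_i=2p_{i-1}-p_{i-2}$ and $p_{i+1}=3p_i-p_{i-1}$ (and the analogous relations for the $q$'s), one checks $u_1=v_{i-2}-v_{i-1}$ and $u_2=v_{i+1}-v_{i-1}$, both of which lie in $A(v_{i-1})$: the index conditions $1\le i-2$ and $i+1\le r$ hold because $2<i<r-1$, and in characteristic zero the set $A(v_{i-1})$ carries no nonvanishing-determinant restriction (Proposition~\ref{fc}(3)). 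If instead $(a_i,a_{i+1})=(3,2)$, Proposition~\ref{primitivos}(3) gives $u_1=(2p_{i-2}-3p_{i-1},\,2q_{i-2}-3q_{i-1})$ and $u_2=(p_i-p_{i-1},\,q_i-q_{i-1})$, and the same computation yields $u_1=v_{i-2}-v_i$ and $u_2=v_{i+1}-v_i$, both in $A(v_i)$. In either case $u_1,u_2\in\Gamma(v_{i-1},v_i)$, which forces $\Gamma(v_{i-1},v_i)=\mathbb{Z}_{\ge 0}u_1+\mathbb{Z}_{\ge 0}u_2=\check{\sigma}(v_{i-1},v_i)\cap\mathbb{Z}^2$, and the chart is therefore smooth.

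The main obstacle I anticipate is precisely this identification step: one must verify that the \emph{primitive} generators $u_1,u_2$ appear in the generating set, not merely positive multiples of them. This is exactly the phenomenon that breaks down when $(a_i,a_{i+1})\in\{(2,4),(4,2)\}$, where the relevant primitive generator is only available as one half of a listed generator (namely $v_{i+1}-v_{i-1}$, respectively $v_{i-2}-v_i$), so that the semigroup fails to be saturated, as illustrated in Example~\ref{ejemplo}. Beyond this, everything reduces to routine manipulation of the recurrences~\eqref{suce} together with Proposition~\ref{fc}.
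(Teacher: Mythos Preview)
Your proof is correct and follows essentially the same approach as the paper: show that the primitive ray generators $u_1,u_2$ of $\check{\sigma}(v_{i-1},v_i)$ already appear among the listed generators of $\Gamma(v_{i-1},v_i)$, so that the semigroup is saturated and equals the smooth lattice cone. Your identification $u_2=v_{i+1}-v_i\in A(v_i)$ in the $(3,2)$ case is in fact slightly cleaner than the paper's choice $u_2=v_i-v_{i-1}$ (the same vector, since $a_{i+1}=2$), which the paper places in $A(v_{i-1})$ even though the index $j=i$ is excluded there.
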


\begin{proof}
    By Theorem \ref{nashnormal}, if $(a_i,a_{i+1})\in \{(2,3),(3,2)\}$, then 
    $X_{\sigma(v_{i-1},v_i)}$ is smooth. 
    Therefore, it suffices to show that 
    $$
    \Gamma(v_{i-1},v_i) = \check{\sigma}(v_{i-1},v_i)\cap \mathbb{Z}^2.
    $$
And to show that these semigroups coincide, it is enough to check that the primitive generators of $\check{\sigma}(v_{i-1},v_i)$ lie in $\Gamma(v_{i-1},v_i)$, since their determinant is equal to one.
    
    \medskip
    \noindent\textbf{Case 1.} 
    If $(a_i,a_{i+1})=(2,3)$, Proposition \ref{primitivos} (2), states that the primitive generators of 
    $\check{\sigma}(v_{i-1},v_i)$ are $$(p_{i-2}- p_{i-1}, \, q_{i-2}- q_{i-1}) \quad\text{and}\quad (3 p_i - 2p_{i-1}, \, 3 q_i - 2q_{i-1}),$$ 
    which are equal to $v_{i-2}-v_{i-1}$ and $v_{i+1}-v_{i-1}$, respectively.
    Note that both vectors belong to $A(v_{i-1})$, and hence to $\Gamma(v_{i-1},v_i)$.
    
    \medskip
    \noindent\textbf{Case 2.} 
  If $(a_i, a_{i+1}) = (3,2)$, Proposition \ref{primitivos} (3) ensures that the primitive generators of 
$\check{\sigma}(v_{i-1},v_i)$ are 
$$
(2p_{i-2} - 3p_{i-1},\, 2q_{i-2} - 3q_{i-1}) 
\quad \text{and} \quad
(p_i - p_{i-1},\, q_i - q_{i-1}),
$$
which are equal to $v_{i-2} - v_i$ and $v_i - v_{i-1}$, respectively.

\noindent The first vector lies in $A(v_i)$ and the second in $A(v_{i-1})$, and hence both belong to $\Gamma(v_{i-1},v_i)$.\\
    
    In both cases we conclude that $X_{\Gamma(v_{i-1},v_i)}$ is smooth.
\end{proof}

\begin{proposition}\label{res5}
    Let $[1,a_2,\dots, a_r]$ be the continued fraction associated to the normal toric surface $X_{\sigma}$. Then:
    \begin{enumerate}[label=(\arabic*)]
        \item $X_{\Gamma(v_0,v_1)}$ is smooth if and only if $a_2=2$.
        \item $X_{\Gamma(v_{r-1},v_r)}$ is smooth if and only if $a_r=2$.
    \end{enumerate}
\end{proposition}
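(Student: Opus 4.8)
The plan is to deduce both statements from Propositions~\ref{res1} and~\ref{res2}, treating the two implications separately. For the ``only if'' directions I would argue via normality: a smooth variety is normal and hence coincides with its own normalization, and by the remark recalled in Section~\ref{descri} the normalization of $X_{\Gamma(v_0,v_1)}$ (resp.\ of $X_{\Gamma(v_{r-1},v_r)}$) is $X_{\sigma(v_0,v_1)}$ (resp.\ $X_{\sigma(v_{r-1},v_r)}$). Thus, if $X_{\Gamma(v_0,v_1)}$ is smooth, then $X_{\sigma(v_0,v_1)}$ is smooth and Proposition~\ref{res1} forces $a_2=2$; symmetrically, smoothness of $X_{\Gamma(v_{r-1},v_r)}$ forces $a_r=2$ by Proposition~\ref{res2}.

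For the ``if'' directions I would imitate the proof of Proposition~\ref{res4}. The inclusion $\Gamma(v_0,v_1)\subseteq\check{\sigma}(v_0,v_1)\cap\mathbb{Z}^2$ holds because, again by the remark of Section~\ref{descri}, $\check{\sigma}(v_0,v_1)\cap\mathbb{Z}^2$ is the saturation of $\Gamma(v_0,v_1)$. Consequently, to prove that $X_{\Gamma(v_0,v_1)}$ is smooth it suffices to show that the two primitive ray generators of $\check{\sigma}(v_0,v_1)$ belong to $\Gamma(v_0,v_1)$: when $a_2=2$, Proposition~\ref{res1} already guarantees that these two generators have determinant $1$, hence form a basis of $\mathbb{Z}^2$ and generate $\check{\sigma}(v_0,v_1)\cap\mathbb{Z}^2$, so their membership in $\Gamma(v_0,v_1)$ together with the above inclusion yields $\Gamma(v_0,v_1)=\check{\sigma}(v_0,v_1)\cap\mathbb{Z}^2$, and therefore $X_{\Gamma(v_0,v_1)}=X_{\sigma(v_0,v_1)}$ is smooth. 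The same scheme applies at $v_{r-1}+v_r$ using Proposition~\ref{res2}.

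It then remains to exhibit the primitive generators inside the semigroups. For (1), when $a_2=2$ one computes $v_0=(1,0)$, $v_1=(1,1)$, $v_2=(1,2)$, and by the proof of Proposition~\ref{res1} the primitive generators of $\check{\sigma}(v_0,v_1)=\mathbb{R}_{\geq0}\{(1,0),(0,2)\}$ are $(1,0)$ and $(0,1)$; here $(1,0)=v_0$ is one of the defining generators $\{v_0,v_1\}$ of $\Gamma(v_0,v_1)$, and $(0,1)=v_2-v_1\in A(v_1)$ (the index $2$ lies in $\{0,\dots,r\}\setminus\{0,1\}$, since $r\geq 2$). For (2), when $a_r=2$ the recurrence gives $v_r=2v_{r-1}-v_{r-2}$, whence $v_{r-2}-v_r=2(v_{r-2}-v_{r-1})$, and by the proof of Proposition~\ref{res2} the primitive generators of $\check{\sigma}(v_{r-1},v_r)$ are $v_{r-2}-v_{r-1}=(p_{r-2}-p_{r-1},q_{r-2}-q_{r-1})$ and $v_r=(p_r,q_r)$; here $v_{r-2}-v_{r-1}\in A(v_{r-1})$ (the index $r-2$ lies in $\{0,\dots,r\}\setminus\{r-1,r\}$, since $r\geq 2$) and $v_r$ is one of the defining generators $\{v_{r-1},v_r\}$ of $\Gamma(v_{r-1},v_r)$. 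In both cases the two primitive generators belong to the corresponding semigroup, which completes the ``if'' direction.

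The computations here are short; the step I regard as the real content is the reduction itself, namely invoking the normalization statement of Section~\ref{descri} in two ways — once to obtain $\Gamma(v_0,v_1)\subseteq\check{\sigma}(v_0,v_1)\cap\mathbb{Z}^2$ (without which exhibiting the generators of the saturated semigroup inside $\Gamma(v_0,v_1)$ would not force equality), and once to pass from smoothness of $X_{\Gamma(v_0,v_1)}$ back to smoothness of the cone. A minor point to keep track of is the boundary case $r=2$, where the two extremal vertices are $v_0+v_1$ and $v_1+v_2$ and one must check that the indices $2$ and $r-2=0$ still fall in $A(v_1)$, which they do.
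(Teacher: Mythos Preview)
Your proof is correct and follows essentially the same approach as the paper: the ``only if'' direction passes through normality to invoke Propositions~\ref{res1} and~\ref{res2}, and the ``if'' direction verifies that the primitive generators of the localization cone already lie in $\Gamma(v_0,v_1)$ (resp.\ $\Gamma(v_{r-1},v_r)$), with the same identification $(0,1)=v_2-v_1\in A(v_1)$. Your treatment is in fact slightly more explicit than the paper's, which dismisses case~(2) as analogous and does not spell out the inclusion argument or the boundary case $r=2$.
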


\begin{proof}
    \noindent \textbf{(1)} First, suppose that $X_{\Gamma(v_0,v_1)}$ is smooth. In particular, it is normal, and therefore 
    $$
    X_{\Gamma(v_0,v_1)} = X_{\sigma(v_0,v_1)}.
    $$
    By Proposition \ref{res1}, this implies that $a_2=2$.  

    Conversely, if $a_2=2$, then by Proposition \ref{res1}, $X_{\sigma(v_0,v_1)}$ is smooth. To conclude that $X_{\Gamma(v_0,v_1)}$ is also smooth, it suffices to verify that
    $$
    \Gamma(v_0,v_1) = \check{\sigma}(v_0,v_1)\cap \mathbb{Z}^2.
    $$
    In the proof of Proposition \ref{res1}, we saw that the primitive generators of $\check{\sigma}(v_0,v_1)$ are 
    $$
    v_0=(1,0) \quad \text{and} \quad (0,1).
    $$
    Since $v_0\in \Gamma(v_0,v_1)$, it only remains to check that $(0,1)\in \Gamma(v_0,v_1)$. Note that
    $$
    v_2-v_1 = (a_2-1,a_2)-(1,1) = (a_2-2, a_2-1) = (0,1).
    $$
    Since $(0,1)\in A(v_1)$, we conclude that $(0,1)\in \Gamma(v_0,v_1)$.  

    \medskip
    \noindent \textbf{(2)} The proof of this case is analogous to the previous one.
\end{proof}

\begin{proposition}\label{res6}
   Let $[1,a_2,\dots , a_r]$ be the continued fraction associated to the normal toric surface $X_{\sigma}$. 
Suppose that $v_{i-1}+v_i$ is a vertex of the Newton polyhedron $N_{\sigma}(\Gamma_0)$ for $2<i<r-1$. 
If $(a_i,a_{i+1})\in \{(2,4),(4,2)\}$, then the affine chart of the Nash blowup $X_{\Gamma(v_{i-1},v_i)}$ is not smooth.
\end{proposition}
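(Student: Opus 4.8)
The plan is to reduce the non-smoothness of the affine chart $X_{\Gamma(v_{i-1},v_i)}$ to the failure of $\Gamma(v_{i-1},v_i)$ to be saturated, and then to exhibit a primitive ray generator of the (already smooth) localization cone that does not lie in $\Gamma(v_{i-1},v_i)$. First I would note that, by Theorem~\ref{nashnormal} (equivalently Proposition~\ref{res3}), when $(a_i,a_{i+1})\in\{(2,4),(4,2)\}$ the normalized chart $X_{\sigma(v_{i-1},v_i)}$ is smooth, so $\check\sigma(v_{i-1},v_i)\cap\mathbb{Z}^2=\mathbb{Z}_{\ge 0}w_1+\mathbb{Z}_{\ge 0}w_2$, where $w_1,w_2$ are the two primitive ray generators listed in Proposition~\ref{primitivos}~(1) and form a $\mathbb{Z}$-basis. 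Since $X_{\sigma(v_{i-1},v_i)}$ is the normalization of $X_{\Gamma(v_{i-1},v_i)}$, the latter is smooth if and only if $\Gamma(v_{i-1},v_i)=\check\sigma(v_{i-1},v_i)\cap\mathbb{Z}^2$. Hence it suffices to show that one of $w_1,w_2$ is missing from $\Gamma(v_{i-1},v_i)$.

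The conceptual core is a simple membership criterion: a primitive ray generator $w$ of a smooth two-dimensional cone belongs to a subsemigroup $\Gamma$ of $\check\sigma\cap\mathbb{Z}^2$ whose rational cone is $\check\sigma$ if and only if $w$ is literally one of the chosen generators of $\Gamma$. Indeed, after a unimodular change of coordinates carrying $w$ to $(0,1)$ and the other primitive ray generator to $(1,0)$, every element of the cone has nonnegative entries; so in any expression $w=\sum_j c_j g_j$ with $c_j\in\mathbb{Z}_{\ge 0}$ only generators $g_j$ on the ray $\mathbb{R}_{\ge 0}(0,1)$ can appear, and such lattice vectors (all of the form $(0,m)$ with $m\ge 1$) can sum to $(0,1)$ only if a single generator equals $(0,1)=w$. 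Thus the whole argument reduces to checking that a specific vector is neither $v_{i-1}$, $v_i$, nor of the form $v_j-v_{i-1}$ or $v_k-v_i$ for the admissible indices.

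I would then carry out the two symmetric cases. For $(a_i,a_{i+1})=(2,4)$, Proposition~\ref{primitivos}~(1) gives the primitive generators $v_{i-2}-v_{i-1}$ and $w=(2p_i-p_{i-1},2q_i-q_{i-1})=2v_i-v_{i-1}$; note $v_{i+1}-v_{i-1}=2w$ because $a_{i+1}=4$. Now $w=v_j-v_{i-1}$ would force $v_j=2v_i$, a reducible element of $\check\sigma\cap\mathbb{Z}^2$ and hence not a member of the Hilbert basis $\{v_0,\dots,v_r\}$; and $w=v_k-v_i$ would force $v_k=3v_i-v_{i-1}=v_{i+1}-v_i$, whose second coordinate $q_{i+1}-q_i=3q_i-q_{i-1}$ lies strictly between $q_i$ and $q_{i+1}$, which is impossible since $q_0<q_1<\cdots<q_r$. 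As $w\neq v_{i-1},v_i$ trivially, $w\notin\Gamma(v_{i-1},v_i)$. For $(a_i,a_{i+1})=(4,2)$, the primitive generators are $v_i-v_{i-1}$ and $u=(p_{i-2}-2p_{i-1},q_{i-2}-2q_{i-1})=2v_{i-1}-v_i$, with $v_{i-2}-v_i=2u$ because $a_i=4$; here $u=v_k-v_i$ would force $v_k=2v_{i-1}$ (reducible, hence excluded), while $u=v_j-v_{i-1}$ would force $v_j=3v_{i-1}-v_i=v_{i-2}-v_{i-1}$, whose second coordinate $q_{i-2}-q_{i-1}$ is negative, impossible since every $v_\ell$ has nonnegative coordinates. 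Hence $u\notin\Gamma(v_{i-1},v_i)$. In both situations $\Gamma(v_{i-1},v_i)\subsetneq\check\sigma(v_{i-1},v_i)\cap\mathbb{Z}^2$, so $X_{\Gamma(v_{i-1},v_i)}$ is not normal, and therefore not smooth.

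The only mildly delicate points are establishing the membership criterion above, which is short but carries the explanation for the discrepancy with the normalized case observed in Example~\ref{ejemplo}, and keeping track of indices: the hypothesis $2<i<r-1$ guarantees $i-2\ge 1$ and $i+1\le r$, so that $v_{i-2},v_{i+1}$ exist and the strict inequalities $q_{i-2}<q_{i-1}$ and $q_i<q_{i+1}$ are available. That strict monotonicity itself is the elementary induction $q_\ell=a_\ell q_{\ell-1}-q_{\ell-2}\ge 2q_{\ell-1}-q_{\ell-2}>q_{\ell-1}$ using $a_\ell\ge 2$ and $q_0=0<q_1=1$. Everything else is the same bookkeeping with the recurrences~\eqref{suce} and Proposition~\ref{fc} used repeatedly above.
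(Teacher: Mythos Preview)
Your proof is correct and follows essentially the same approach as the paper: show that the semigroup $\Gamma(v_{i-1},v_i)$ is not saturated by exhibiting a primitive ray generator of the (smooth) localization cone that is not among the listed generators of $\Gamma(v_{i-1},v_i)$. The only differences are cosmetic: where the paper invokes \cite[Lemma~12]{GT} (a face argument) to conclude that a primitive ray generator lying in $\Gamma$ must already be one of its generators, you give a short self-contained proof via a unimodular change of coordinates; and in the subcase $w=v_k-v_i$ of $(a_i,a_{i+1})=(2,4)$, the paper rules out $v_k=3v_i-v_{i-1}$ by noting $v_{i+1}=v_k+v_i$ would contradict irreducibility of $v_{i+1}$, whereas you use the strict monotonicity of the $q_\ell$.
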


\begin{proof}
It suffices to show that $\Gamma(v_{i-1},v_i)$ is not saturated.  
We will consider the case $(a_i,a_{i+1})=(2,4)$, the case $(4,2)$ is analogous.

By Proposition~\ref{primitivos} (1), the primitive generators of the cone $\check{\sigma}(v_{i-1},v_i)$ are
$$
u_1 = v_{i-2}-v_{i-1}
\qquad\text{and}\qquad
u_2 = 2v_i - v_{i-1}.
$$
We claim that $u_2 \notin \Gamma(v_{i-1},v_i)$. To show this, suppose the contrary.  
Consider the ray $\tau = \mathbb{R}_{\ge 0}\{u_2\}$. Then $\tau \cap \Gamma(v_{i-1},v_i)$ is a face of $\Gamma(v_{i-1},v_i)$ (see \cite[Lemma~12]{GT}). Since $u_2$ is a primitive generator of $\tau$, it is irreducible in the face. Hence, if $u_2 \in \Gamma(v_{i-1},v_i)$, it must be one of the generators of $\Gamma(v_{i-1},v_i)$.

The generators of $\Gamma(v_{i-1},v_i)$ are 
$$
\{v_{i-1},v_i\} \ \cup\ A(v_{i-1}) \ \cup\ A(v_i).
$$  
We now check that $u_2$ cannot coincide with any of these generators:

\begin{itemize}
  \item Clearly, $u_2 \neq v_i$ and $u_2 \neq v_{i-1}$; otherwise we would have $v_{i-1} = v_i$, which is impossible by definition.
  
  \item If $u_2 \in A(v_{i-1})$, then there would exist some $j \notin \{i-1,i\}$ such that
  $v_j - v_{i-1} = u_2$, so $v_j = 2v_i - v_{i-1} + v_{i-1} = 2v_i$.  
  But this cannot occur, since these elements are irreducible.
  
  \item If $u_2 \in A(v_i)$, then there would exist some $j \notin \{i-1,i\}$ such that  
  $v_j - v_i = u_2$, and thus $v_j = 3v_i - v_{i-1}$.  
  Since $v_{i+1} = 4v_i - v_{i-1}$, this would imply 
  \[
  v_{i+1} = v_j + v_i,
  \]
  contradicting the irreducibility of $v_{i+1}$.
\end{itemize}

Thus, $u_2$ cannot be any of the listed generators, and therefore $u_2 \notin \Gamma(v_{i-1},v_i)$. However,
$$
2u_2 = 4v_i - 2v_{i-1} = v_{i+1} - v_{i-1},
$$ 
which does lie in the semigroup. Hence, $\Gamma(v_{i-1},v_i)$ is not saturated.  
We conclude that the affine chart $X_{\Gamma(v_{i-1},v_i)}$ is not normal, and therefore cannot be smooth.
\end{proof}

\begin{proof}[Proof of Theorem \ref{nash}]
Let $X_{\sigma}$ be a toric surface whose Nash blowup is smooth, and let $[1,a_2,\dots, a_r]$ be the associated continued fraction. 

For each vertex $v_{i-1}+v_i$ of $N_\sigma(\Gamma_0)$, the localization $X_{\Gamma(v_{i-1},v_i)}$ is smooth and hence normal. Thus,
$$
X_{\Gamma(v_{i-1},v_i)} = X_{\sigma(v_{i-1},v_i)}.
$$

It follows that the normalized Nash blowup of $X_{\sigma}$ is also smooth. Applying Theorem \ref{nashnormal} together with Proposition \ref{res6}, the continued fraction $[1,a_2,\dots,a_r]$ must have one of the forms listed in the statement.

The converse follows immediately from Propositions \ref{res4} and \ref{res5}.
\end{proof}


\bibliographystyle{plain} 
\bibliography{Reference}

\vspace{.5cm}
\noindent{\footnotesize \textsc {Amador Cruz-Fuentes, Centro de Ciencias Matem\'aticas, UNAM Campus Morelia.} \\ E-mail: acruz@matmor.unam.mx}\\

\end{document}